\newcommand{\re}{\mathbb{R}}
\newcommand{\N}{\mathbb{N}}
\newcommand{\diag}{\mbox{diag}}
\newcommand{\half}{\frac{1}{2}}
\newcommand{\lmd}{\lambda}
\newcommand{\nn}{\nonumber}
\newcommand{\dt}{\delta}
\newcommand{\Dt}{\Delta}
\def\af{\alpha}
\def\bt{\beta}
\newcommand{\sig}{\sigma}
\newcommand{\reff}[1]{(\ref{#1})}
\newcommand{\pt}{\partial}
\newcommand{\prm}{\prime}
\newcommand{\mc}[1]{\mathcal{#1}}
\newcommand{\bdes}{\begin{description}}
\newcommand{\edes}{\end{description}}
\newcommand{\bal}{\begin{align}}
\newcommand{\eal}{\end{align}}
\newcommand{\bnum}{\begin{enumerate}}
\newcommand{\enum}{\end{enumerate}}
\newcommand{\bit}{\begin{itemize}}
\newcommand{\eit}{\end{itemize}}
\newcommand{\bea}{\begin{eqnarray}}
\newcommand{\eea}{\end{eqnarray}}
\newcommand{\be}{\begin{equation}}
\newcommand{\ee}{\end{equation}}
\newcommand{\baray}{\begin{array}}
\newcommand{\earay}{\end{array}}
\newcommand{\bsry}{\begin{subarray}}
\newcommand{\esry}{\end{subarray}}
\newcommand{\bca}{\begin{cases}}
\newcommand{\eca}{\end{cases}}
\newcommand{\bcen}{\begin{center}}
\newcommand{\ecen}{\end{center}}
\newcommand{\bbm}{\begin{bmatrix}}
\newcommand{\ebm}{\end{bmatrix}}
\newcommand{\bmx}{\begin{matrix}}
\newcommand{\emx}{\end{matrix}}
\newcommand{\bpm}{\begin{pmatrix}}
\newcommand{\epm}{\end{pmatrix}}
\newcommand{\btab}{\begin{tabular}}
\newcommand{\etab}{\end{tabular}}
\newtheorem{theorem}{Theorem}[section]
\newtheorem{pro}[theorem]{Proposition}
\newtheorem{lem}[theorem]{Lemma}
\newtheorem{cor}[theorem]{Corollary}
\newtheorem{ass}[theorem]{Assumption}
\theoremstyle{definition}
\newtheorem{exm}[theorem]{Example}
\renewcommand{\subsection}[1]{
    \stepcounter{subsection}
    \settowidth{\hangindent}{\bf\thesubsection.~}
    \hangafter=1
    \bigskip\bigskip\noindent
    {\bf\hbox{\thesubsection.~}#1}\par
    \nobreak
    \medskip
}
\begin{document}

\title{Polynomial Matrix Inequality and Semidefinite Representation
\author{Jiawang Nie\footnote{Department of Mathematics,
University of California, 9500 Gilman Drive, La Jolla, CA 92093.
Email: njw@math.ucsd.edu. The research was partially supported by NSF grants
DMS-0757212, DMS-0844775 and Hellman Foundation Fellowship.}}
\date{March 25, 2011}
}

\maketitle

\begin{abstract}
Consider a convex set $S=\{x \in \mc{D}: G(x) \succeq 0\}$ where
$G(x)$ is a symmetric matrix
whose every entry is a polynomial or rational function,
$\mc{D} \subseteq \re^n$ is a domain on which $G(x)$ is defined,
and $G(x)\succeq 0$ means $G(x)$ is positive semidefinite.
The set $S$ is called semidefinite representable if
it equals the projection of a higher dimensional set
which is defined by a linear matrix inequality (LMI).
This paper studies sufficient conditions guaranteeing
semidefinite representability of $S$.
We prove that $S$ is semidefinite representable in the following cases:
(i) $\mc{D} = \re^n$, $G(x)$ is a matrix polynomial and matrix sos-concave;
(ii) $\mc{D}$ is compact convex, $G(x)$ is a matrix polynomial
and strictly matrix concave on $\mc{D}$;
(iii) $G(x)$ is a matrix rational function
and q-module matrix concave on $\mc{D}$.
Explicit constructions of semidefinite representations are given.
Some examples are illustrated.
\end{abstract}

\normalsize

\section{Introduction}

Suppose $S$ is a convex set in $\re^n$ given in the form
\be \label{eq:Sdef}
S = \{x\in \mc{D}:\, G(x) \succeq 0 \}.
\ee
Here $\mc{D}\subseteq \re^n$ is a domain, and
$G(x)$ is a $m\times m$ symmetric matrix polynomial, that is,
every entry of $G(x)$ is a polynomial in $x$.
The notation $A\succeq 0$ (resp. $A\succ 0$)
means the matrix $A$ is positive semidefinite (resp. definite).
Suppose $G(x)$ has total degree $2d$ and
\be \label{eq:Gdef}
G(x) =  \sum_{\af \in \N^n: \, \af_1+\cdots+\af_n \leq 2d }
G_\af x_1^{\af_1}\cdots x_n^{\af_n}.
\ee
The $G_\af$ are constant symmetric matrices.
The $G(x) \succeq 0$ is called a polynomial matrix inequality (PMI).
When $G(x)$ is linear, optimizing a linear functional over $S$ becomes a standard
semidefinite programming (SDP) problem. SDP is a very nice convex optimization problem,
has many attractive properties, and can be solved efficiently by numerical methods.
We refer to \cite{NN94,VB,SDPhb}.
%
%
It would be a big advantage if an optimization problem can be formulated in SDP form.
So, we are very interested in knowing when and how
the set $S$ is representable by an SDP.

An elementary approach for this representation problem is to
find symmetric matrices $A_0,A_1,\ldots,A_n$ such that
\be \nn
S=\{x\in \re^n: A_0+A_1 x_1 + \cdots + A_nx_n \succeq 0\}.
\ee
If such $A_i$'s exist, we say $S$
has a linear matrix inequality (LMI) representation and $S$ is LMI representable.
Unfortunately, not every convex set in $\re^n$ is LMI representable.
For instance, the convex set $\{x\in\re^2: 1-x_1^4-x_2^4 \geq 0\}$
is not LMI representable, as proved by Helton and Vinnikov \cite{HV}.
Therefore, we are more interested in finding
a lifted LMI representation, that is, in addition to $A_i$,
finding symmetric matrices $B_1,\ldots,B_N$ such that
\be \label{lftLMI}
S = \left\{ x \in \re^n : \, \exists\, y \in \re^N,
A_0 + \sum_{i=1}^n A_i x_i + \sum_{j=1}^N B_j y_j \succeq 0 \right\}.
\ee
If such matrices $A_i$ and $B_j$ exist, we say $S$ is
semidefinite programming (SDP) representable
or just semidefinite representable,
and \reff{lftLMI} is called a lifted LMI or semidefinite representation for $S$.
The variables $y_j$ are called lifting variables. Nesterov and Nemirovski \cite{NN94},
Ben-Tal and Nemirovski \cite{BTN}, and Nemirovski \cite{N06}
gave collections of convex sets that are SDP representable.
Obviously, to have a lifted LMI, a set must be convex and semialgebraic, i.e.,
it can be defined by a boolean combination of scalar polynomial equalities and inequalities.
However, it is unclear whether every convex semialgebraic set has a lifted LMI or not.

When $G(x)$ is diagonal, i.e., $S$ is defined by scalar polynomial inequalities,
there is some work on the semidefinite representability of $S$.
Parrilo \cite{Par06} constructed lifted LMIs for planar convex sets
whose boundaries are rational planar curves of genus zero.
Lasserre \cite{Las06,Las08} constructed lifted LMIs for convex semialgebraic sets satisfying
certain conditions like bounded degree representation (BDR).
Their constructions use moments and sum of squares techniques.
In \cite{HN1}, Helton and Nie proved sufficient conditions
like sos-convexity and strict convexity,
which justify lifted LMIs from moment type constructions.
Later, in \cite{HN2} they further proved
every compact convex semialgebraic set is SDP representable
if its boundary is nonsingular and positively curved.
Recent work in this area can be found
in \cite{AP09,HN08,Hen08,Lau07,Nie08,NiSt09}.

One might consider to apply the existing results for
the case of scalar polynomial inequalities
like in \cite{HN1,HN2,Las06,Las08,Nie08}
to the case of matrix polynomial inequalities. Note
\be \nn
S = \Big\{x\in \mc{D}:\, p_I(x) \geq 0 \quad \forall\, I \subset \{1,2,\ldots,m\} \Big\}.
\ee
Here $p_I(x)$ are principal minors of $G(x)$ with row (or column) index $I$.
Thus, one could think of studying the semidefinite representability of $S$
by using principal minors $p_I(x)$.
If every $p_I(x)$ is sos-concave or strictly concave over $\mc{D}$, then $S$ is SDP representable
and an explicit lifted LMI would be constructed, as shown in \cite{HN1}.
Unfortunately, this is generally not the case in practice.
The basic reason is that the determinants of $2\times 2$ or bigger matrices
are typically neither concave nor convex, and hence the principle minors $p_I(x)$
would generally be neither concave nor convex.
For instance, when $G(x)$ is linear in $x$,
the minors $p_I(x)$ with $|I|>1$ are typically not concave,
while the set $S$ is clearly LMI and SDP representable.
When $G(x)$ has degree bigger than one,
the minors $p_I(x)$ are also generally not concave, as will be shown by examples later.
Furthermore, $G(x)$ has exponentially many principle minors, and they have much higher degrees.
This is also a big disadvantage for using them in practice.
So, it is usually impractical to study SDP representation
through using principle minors.
Therefore, the conditions directly on $G(x)$ are preferable in applications.
The motivation of this paper is to construct explicit SDP representations for $S$
and prove sufficient conditions directly on $G(x)$ justifying them.

In some applications, $G(x)$ might be given as a matrix rational function, i.e.,
its every entry is rational. This is often the case in control theory.
When $G(x)$ is a scalar rational function,
the author in \cite{Nie08} studied SDP representability of $S$.
In \cite{Nie08}, explicit constructions of lifted LMIs
are given, and sufficient conditions justifying them are proved.
One also might consider to describe $S$ by using polynomials only,
e.g., by multiplying denominators.
However, this kind of processing might destroy matrix concavity,
and usually makes the problem more difficult.
In this paper, we will construct explicit lifted LMIs for $S$ directly based on $G(x)$,
and prove sufficient conditions justifying them.

\bigskip

This paper is organized as follows.
Section~\ref{sec:mat-sos} discusses the semidefinite representation of $S$
when $\mc{D} = \re^n$, and $G(x)$ is polynomial and matrix sos-concave.
Section~\ref{sec:strct-cocav} discusses the semidefinite representation of $S$
when $\mc{D}$ is a compact convex domain,
and $G(x)$ is polynomial and strictly matrix concave on $\mc{D}$.
The case that $G(x)$ is rational and q-module matrix concave over $\mc{D}$
will be discussed in Section~\ref{sec:rat-MI}.

\bigskip
\noindent
{\bf Notations.}\,
The symbol $\N$ (resp., $\re$) denotes the set of nonnegative integers (resp., real numbers).
For any $t\in \re$, $\lceil t\rceil$ denotes the smallest integer not smaller than $t$.
The $\re_+^n$ denotes the nonnegative orthant.
For $x \in \re^n$, $x_i$ denotes the $i$-th component of $x$,
that is, $x=(x_1,\ldots,x_n)$.
When $y$ is a vector indexed by integer vectors in $\N^n$ and $\af \in \N^n$,
$y_\af$ denotes the entry of $y$ whose index is $\af$.
For $\af \in \N^n$, denote $|\af| = \af_1 + \cdots + \af_n$.
For $x \in \re^n$ and $\af \in \N^n$, $x^\af$ denotes $x_1^{\af_1}\cdots x_n^{\af_n}$.
For $\af,\bt \in \N^n$, denote $\af \leq  \bt$ if every $\af_i \leq \bt_i$.
The symbol $\N_{\leq k}$ denotes the multi-index set $\{\af\in\N^n: |\af| \leq k \}$.
For every integer $i\geq 0$, $e_i$ denotes the $i$-th standard unit vector.
The $[x]_d$ denotes the vector of all monomials
having degrees at most $d$ with respect to graded lexicographical ordering, that is,
\[
[x]_d^T = [\, 1 \quad  x_1 \quad \cdots \quad x_n \quad x_1^2 \quad
x_1x_2 \quad \cdots  x_n^2 \quad \cdots
\quad x_1^d \quad x_1^{d-1}x_2 \quad \cdots \quad x_n^d \,].
\]
A polynomial $p(x)$ is said to be a sum of squares (sos)
if there exist finitely many polynomials $q_i(x)$ such that $p(x) = \sum q_i(x)^2$.
A matrix polynomial $H(x)$ is called sos
if there is a matrix polynomial $F(x)$ such that $H(x) = F(x)^TF(x)$.
A polynomial $f(x)$ is called sos-convex if its Hessian is sos,
and  $f(x)$ is sos-concave if $-f(x)$ is sos-convex.
For a set $S$, $int(S)$ denotes its interior, and
$\pt S$ denotes its boundary.
For $u\in \re^N$, $\| u \|_2$ denotes the standard Euclidean norm.
For a matrix $X$, $X^T$ denotes its transpose,
$\|X\|_F$ denotes the Frobenius norm of $X$, i.e., $\|X\|_F = \sqrt{Trace(X^TX)}$,
and $\|X\|_2$ denotes the standard operator $2$-norm of $X$.
The symbol $\bullet$ denotes the standard Frobenius inner product of matrix spaces,
and $I_N$ denotes the $N\times N$ identity matrix.
For a function $f(x)$, $\mc{Z}(f)=\{x\in\re^n: f(x) =0\}$,
$\nabla_{x}f(x)$ denotes its gradient with respect to $x$,
and $\nabla_{xx}f(x)$ denotes its Hessian with respect to $x$.

\section{Matrix sos-concavity} \label{sec:mat-sos}
\setcounter{equation}{0}

This section assumes the domain $\mc{D} = \re^n$ is the whole space
and $G(x)$ is an $m\times m$ symmetric matrix polynomial of degree $2d$.
We will first construct an SDP relaxation for $S$ using moments,
and then prove it is a correct lifted LMI when $G(x)$ satisfies certain conditions.

A natural SDP relaxation of $S$ can be obtained through using moments.
Define linear matrix pencils $\mc{G}(y)$ and $\mc{A}_d(y)$ as
\begin{align*}
\mc{G}(y)  =   \sum_{\af \in\N_{\leq 2d}} y_\af G_\af, \quad
\mc{A}_d(y) =  \sum_{\af \in\N_{\leq 2d}} y_\af A_\af^{(d)},
\end{align*}
where $G_\af$ are from \reff{eq:Gdef} and $A_\af^{(d)}$ are such that
\be \label{def:A_af}
[x]_d [x]_d^T = \sum_{\af \in\N_{\leq 2d}} x^\af A_\af^{(d)}.
\ee
Since
$
S = \left\{ x\in \re^n:\, G(x) \succeq 0, \,
[x]_d[x]_d^T \succeq 0 \right\},
$
we know
\[
S = \left\{ (y_{e_1},\ldots,y_{e_n}) \in \re^n:\,
\exists \, x \in \re^n, \, y_\af = x^\af \,\, \forall \, \af \in \N_{\leq 2d}, \,
\mc{G}(y) \succeq 0, \, \mc{A}_d(y) \succeq 0
\right\}.
\]
Here, each $e_i$ denotes the $i$-th standard unit vector
whose only nonzero entry is one at index $i$.
If the condition $y_\af = x^\af$ is removed in the above,  then $S$ is a subset of
\be \label{def:L-sos}
L = \left\{ x \in \re^n
\left| \baray{c}
\exists \, y \in \re^{\binom{n+2d}{2d}}, \,
\mc{G}(y)  \succeq 0, \, \mc{A}_d(y) \succeq 0, \\
y_0 = 1, x_1 = y_{e_1}, \ldots, x_n = y_{e_n}
\earay \right.
\right\}.
\ee
So, $S\subseteq L$. Does $S=L$? What conditions make $S=L$?
We look for sufficient conditions guaranteeing $S=L$.

The matrix-valued function $G(x)$ is called {\it matrix concave} over
a convex domain $\mc{D}$ if for all $u,v \in \mc{D} $ and $0\leq \theta \leq 1$
it holds that
\[
G(\theta u + (1-\theta)v ) \succeq  \theta G(u) + (1-\theta)G(v).
\]
In the above, when $\mc{D}=\re^n$, we just say
$G(x)$ is matrix concave.
The matrix concavity of $G(x)$ over $\mc{D}$ is equivalent to
\[
-\nabla_{xx} (\xi^T G (x) \xi) \succeq 0
\quad \forall \,  \xi\in \re^m, \, \forall \, x \in \mc{D}.
\]
We would like to point out that $G(x)$ might not be matrix concave while $S$ is still convex.
For instance, the quadratic polynomial matrix inequality
\[
Q(x) := \bbm
x_1x_2 +2 & x_1x_2 & 0 \\
x_1x_2 & x_1x_2 - 1 & 0 \\
0 & 0 & x_1+x_2
\ebm \succeq 0
\]
defines the convex set
$\{x\in \re_+^2: x_1x_2 \geq 2 \}$, but $Q(x)$ is not matrix concave on $\re_+^2$.

Generally, it is difficult to check matrix concavity.
Even for the simple case of quadratic matrix polynomials,
the problem is already NP-hard, as shown below.

\begin{pro}
It is NP-hard to check the matrix concavity of quadratic matrix polynomials.
\end{pro}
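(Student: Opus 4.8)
The plan is to reduce the matrix concavity test for a quadratic $G(x)$ to deciding nonnegativity of a biquadratic form. Since $G(x)$ is quadratic, for each fixed $\xi \in \re^m$ the scalar $\xi^T G(x)\xi$ is a quadratic polynomial in $x$, so its Hessian $\nabla_{xx}(\xi^T G(x)\xi)$ is a constant symmetric $n\times n$ matrix (independent of $x$) depending quadratically on $\xi$; write it as $-H(\xi)$, where $H(\xi)=\sum_{k,l}\xi_k\xi_l M_{kl}$ for constant symmetric matrices $M_{kl}=M_{lk}$. By the Hessian characterization of matrix concavity recalled above, $G$ is matrix concave if and only if $H(\xi)\succeq 0$ for all $\xi$, i.e. the biquadratic form
\[ b(\eta,\xi):=\eta^T H(\xi)\eta=\sum_{k,l}\xi_k\xi_l\,(\eta^T M_{kl}\eta) \]
is nonnegative on $\re^n\times\re^m$. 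Conversely, every biquadratic form arises this way: given symmetric $M_{kl}$, the choice $G_{kl}(x)=-\tfrac12 x^T M_{kl}x$ defines a quadratic matrix polynomial whose matrix concavity is exactly nonnegativity of the associated $b$. Hence deciding matrix concavity of quadratic matrix polynomials and deciding nonnegativity of biquadratic forms are polynomial-time equivalent, and it suffices to prove the latter is NP-hard.

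For that I would reduce from the problem of deciding whether a symmetric integer matrix $A$ is copositive, which is NP-hard (Murty and Kabadi; see also de Klerk and Pasechnik for the link to the stability number). Given $A\in\re^{m\times m}$, set $n=m$ and consider the biquadratic form
\[ b(\eta,\xi)=\sum_{i,j}A_{ij}\,\eta_i^2\xi_j^2+\lambda\sum_{i<j}(\eta_i\xi_j-\eta_j\xi_i)^2, \]
where $\lambda>0$ is a parameter to be fixed; by the previous paragraph this $b$ corresponds to an explicit quadratic matrix polynomial $G$. One direction is immediate: if $b\ge0$ everywhere then, taking $\eta=\xi=(\sqrt{w_1},\ldots,\sqrt{w_m})$ for any $w\ge 0$, the penalty vanishes and $b=w^T A w$, so $A$ is copositive. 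Thus a non-copositive $A$ forces $b$ to take a negative value for every $\lambda\ge0$, i.e. $G$ is not matrix concave.

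The harder direction, and the \emph{main obstacle}, is to show that when $A$ is copositive one can choose $\lambda$ of polynomially bounded bit-size so that $b\ge0$ everywhere (whence $G$ is matrix concave and the reduction is correct). On the product of unit spheres the copositivity term is bounded by $\max_{i,j}|A_{ij}|$, while the penalty vanishes exactly on the alignment set $\{\xi\parallel\eta\}$, on which $b$ reduces to the copositivity quartic $w^T A w\ge0$ with $w_i=\eta_i^2$. A first-order (KKT) computation at a minimizing alignment point shows the copositivity term is stationary there, so its negative part is governed to second order by the misalignment, which is precisely what the penalty measures; this suggests a uniform bound $|\,\text{negative part}\,|\le C\cdot(\text{penalty})$ and hence a finite, polynomially bounded $\lambda$. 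Turning this heuristic into a rigorous Łojasiewicz/Hessian-type estimate that is uniform over all, possibly degenerate, copositive instances is the crux of the argument; once established, $A$ copositive $\iff$ $b\ge0$ $\iff$ $G$ matrix concave, and NP-hardness follows.
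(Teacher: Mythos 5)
Your first paragraph reproduces the paper's entire argument: the paper's proof takes $G(x) = -\frac{1}{2}\sum_{i=1}^{\overline{m}} (x^TB_ix)A_i$ with $\overline{m}=\frac{1}{2}m(m+1)$, computes $-\nabla_{xx}(\xi^TG(x)\xi) = \sum_i (\xi^TA_i\xi)B_i$, and concludes that matrix concavity of quadratic matrix polynomials is equivalent to nonnegativity of biquadratic forms; your construction $G_{kl}(x) = -\frac{1}{2}x^TM_{kl}x$ is the same reduction. The divergence is what happens next: the paper simply invokes the theorem of Ling, Nie, Qi and Ye \cite{LNQY} that checking nonnegativity of biquadratic forms is NP-hard, and stops. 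You instead attempt to re-prove that theorem via a penalized reduction from copositivity, and this is where your proposal has a genuine gap --- one you flag yourself. The direction ``$A$ copositive $\Rightarrow$ some $\lambda$ of polynomially bounded bit-size makes $b\geq 0$'' is asserted only heuristically, and without it the reduction is incomplete in two distinct ways. First, correctness: your KKT remark only controls exact zeros of $w^TAw$; at nearby points where $w^TAw=\delta>0$ is small, the $A$-term can decrease at \emph{first} order in the misalignment angle $\theta$ while the penalty is only $\Theta(\theta^2)$, so completing the square forces $\lambda \gtrsim c^2/\delta$, where $c$ is the first-order coefficient. One must prove $c = O(\sqrt{\delta})$ uniformly --- e.g.\ by applying the classical gradient estimate $\|\nabla f\|^2 \leq 2Mf$ for nonnegative $C^{1,1}$ functions to the quartic $f(\eta)=\sum_{ij}A_{ij}\eta_i^2\eta_j^2$, which is nonnegative on all of $\re^m$ --- before any finite $\lambda$ is guaranteed to exist. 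Second, complexity: a polynomial-time reduction must actually write down a specific $\lambda$ of polynomial bit-size, and you provide no bound. Note also that the penalty is essential, not decorative: without it, $\sum_{ij}A_{ij}\eta_i^2\xi_j^2 \geq 0$ for all $(\eta,\xi)$ merely says $A$ is entrywise nonnegative (take $\eta=e_i$, $\xi=e_j$), so the quantitative control of $\lambda$ is genuinely the crux of your route, not a technicality to be deferred.

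The repair is straightforward: stop after your first paragraph, which is correct and matches the paper, and cite the known NP-hardness of biquadratic form nonnegativity \cite{LNQY} in place of your copositivity reduction. Alternatively, if you want a self-contained proof, carry out the gradient estimate sketched above and exhibit an explicit $\lambda$ polynomial in $\max_{i,j}|A_{ij}|$ and $m$; as written, however, the proposal does not yet constitute a proof.
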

\begin{proof}
Let $\overline{m} = \half m(m+1)$.
For any symmetric matrices $A_1,\ldots, A_{\overline{m}} \in \re^{m\times m}$ and
$B_1,\ldots, B_{\overline{m}} \in \re^{n\times n}$, define the matrix polynomial
\[
G(x) = -\half \sum_{i=1}^{\overline{m}}  (x^TB_ix)A_i.
\]
Then we have
\[
-\nabla_{xx} (\xi^T G (x) \xi) = \sum_{i=1}^{\overline{m}}  (\xi^TA_i\xi) B_i.
\]
So, $G(x)$ is matrix concave if and only if the following bi-quadratic form in $(\xi,z)$
\[
\sum_{i=1}^{\overline{m}} (\xi^TA_i\xi) (z^TB_iz)
\]
is nonnegative everywhere. It has been proven in \cite{LNQY} that
it is NP-hard to check the nonnegativity of bi-quadratic forms.
Therefore, it must also be NP-hard to check the matrix concavity of quadratic $G(x)$.
\end{proof}

A stronger but easier checkable condition than matrix concavity
is the so called matrix sos-concavity.
We say $G(x)$ is {\it matrix sos-concave} if
for every $\xi \in \re^m$ there exists a matrix polynomial $F_{\xi}(x)$ in $x$ such that
\be  \label{MATsoscav}
- \nabla_{xx} (\xi^T G(x) \xi) = F_{\xi}(x)^T F_{\xi}(x).
\ee
The above $F_{\xi}(x)$ has $n$ columns but its number of rows
might be different from $n$, and its coefficients of $x^\af$ depend on $\xi$.
Note that when $G(x)$ is quadratic, $G(x)$ is matrix concave if and only if
it is matrix sos-concave. This is because $- \nabla_{xx} (\xi^T G(x) \xi)$
is independent of $x$, and for fixed $\xi$ it is positive semidefinite
if and only if it is sos (Cholesky factorization).

\begin{theorem} \label{thm:mat-sos}
Suppose $G(\tilde{x})\succ 0$ for some $\tilde{x}$.
If $G(x)$ is matrix sos-concave, then $S=L$.
\end{theorem}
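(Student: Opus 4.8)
The plan is to prove the nontrivial inclusion $L \subseteq S$, since $S \subseteq L$ is already recorded in the text. Fix a point $x \in L$ together with a witness $y$ satisfying $y_0 = 1$, $x_i = y_{e_i}$, $\mc{G}(y) \succeq 0$ and $\mc{A}_d(y) \succeq 0$, and write $\hat{x} = (y_{e_1},\ldots,y_{e_n}) = x$; the goal is to show $\xi^T G(\hat{x})\xi \ge 0$ for every fixed $\xi \in \re^m$. The first move is to reinterpret the two LMIs through the Riesz functional $\Lambda_y$ acting on polynomials of degree at most $2d$ by $\Lambda_y(x^\af) = y_\af$. By \reff{eq:Gdef} and \reff{def:A_af}, applying $\Lambda_y$ entrywise gives $\mc{G}(y) = \Lambda_y(G)$ and $\mc{A}_d(y) = \Lambda_y([x]_d[x]_d^T)$. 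Thus $\mc{G}(y) \succeq 0$ says $\Lambda_y(\xi^T G \xi) \ge 0$ for all $\xi$, while the moment-matrix positivity $\mc{A}_d(y) \succeq 0$ says $\Lambda_y(\sigma^2) = c^T \mc{A}_d(y) c \ge 0$ for every $\sigma = c^T[x]_d$ of degree at most $d$, hence $\Lambda_y(p) \ge 0$ for every sum of squares $p$ of degree at most $2d$.

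Next I would fix $\xi$, set $g(x) = \xi^T G(x)\xi$ (a scalar polynomial of degree $\le 2d$), and Taylor-expand $g$ about $\hat{x}$, writing $g(x) = g(\hat{x}) + \nabla g(\hat{x})^T(x-\hat{x}) + h(x)$ with $h$ the second-order remainder, a polynomial of degree $\le 2d$. Applying $\Lambda_y$ and using $\Lambda_y(1) = y_0 = 1$ together with $\Lambda_y(x_i - \hat{x}_i) = y_{e_i} - \hat{x}_i = 0$, the affine part is annihilated and I obtain the clean identity $g(\hat{x}) = \Lambda_y(g) - \Lambda_y(h)$. Since $\Lambda_y(g) = \xi^T\mc{G}(y)\xi \ge 0$, it suffices to prove $\Lambda_y(h) \le 0$, for which it is enough to show that $-h$ is a sum of squares of degree at most $2d$.

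This last reduction is the crux, and the place where matrix sos-concavity enters. Starting from the integral form of the Taylor remainder and substituting $-\nabla_{xx}(\xi^T G \xi) = F_\xi^T F_\xi$ from \reff{MATsoscav}, I get
\[
-h(x) = \int_0^1 (1-t)\,(x-\hat{x})^T F_\xi\big(\hat{x}+t(x-\hat{x})\big)^T F_\xi\big(\hat{x}+t(x-\hat{x})\big)(x-\hat{x})\, dt .
\]
The integrand is $\|w(x,t)\|_2^2$ where $w(x,t) = F_\xi(\hat{x}+t(x-\hat{x}))(x-\hat{x})$ is a vector of polynomials in $(x,t)$, so it is a sum of squares in $(x,t)$. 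The hard part will be arguing that integrating such a sum of squares against the nonnegative weight $(1-t)$ over $[0,1]$ again yields a sum of squares in $x$ alone. I would handle this by expanding each $w_i(x,t) = \sum_k c_{ik}(x)\, t^k$, so that $\int_0^1 (1-t) w_i^2\, dt = \sum_{k,l} c_{ik}(x)c_{il}(x)\,\mu_{k+l}$ with $\mu_j = \int_0^1 (1-t)t^j\, dt$; the matrix $(\mu_{k+l})_{k,l}$ is the Hankel moment matrix of the positive measure $(1-t)\,dt$ on $[0,1]$, hence positive semidefinite, so it factors as $R^T R$ and the quadratic form in the $c_{ik}(x)$ collapses to a sum of squares of polynomials in $x$. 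Summing over $i$ shows $-h$ is sos, of degree at most $2d$ because $\deg h \le 2d$.

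Combining the pieces, $\Lambda_y(-h) \ge 0$ by moment positivity, so $g(\hat{x}) = \Lambda_y(g) + \Lambda_y(-h) \ge 0$; as $\xi$ was arbitrary this gives $G(x) = G(\hat{x}) \succeq 0$, hence $x \in S$ and $L \subseteq S$. I expect the genuinely technical step to be the sos-preservation-under-integration lemma above, equivalently the fact that sos-concavity forces the tangent-minus-function gap $-h$ to be a sum of squares rather than merely nonnegative; the remaining manipulations are bookkeeping. I note that the hypothesis $G(\tilde{x})\succ 0$ does not appear to be used in this inclusion and likely serves as a standing regularity assumption (nonempty interior) shared with the other theorems of the paper.
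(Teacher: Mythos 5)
Your proof is correct, but it takes a genuinely different route from the paper's, and the difference is substantive. The paper argues by contradiction: given $\hat x \in L \setminus S$, it invokes the Hahn--Banach theorem to get a supporting hyperplane $a^Tx \geq b$ of $S$, uses the Slater condition $G(\tilde x)\succ 0$ to produce a Lagrange multiplier $\Lambda \succeq 0$ at a minimizer $u$ of the linear optimization \reff{linopt:G>=0}, writes $a^Tx - \Lambda\bullet G(x) - b$ via the second-order Taylor identity at $u$, decomposes $\Lambda = \sum_k \lmd^{(k)}(\lmd^{(k)})^T$, and cites Lemma~7 of \cite{HN1} to conclude this polynomial is sos, i.e.\ equals $[x]_d^T W [x]_d$ with $W\succeq 0$; substituting the pseudo-moments $\hat y$ then gives $a^T\hat x - b \geq 0$, contradicting the separation. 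You instead prove $L\subseteq S$ directly and pointwise in $\xi$, via a Jensen-type inequality for the Riesz functional $\Lambda_y$: concavity of $g=\xi^TG\xi$ makes the second-order remainder at $\hat x$ the negative of an sos, the conditions $y_0=1$ and $y_{e_i}=\hat x_i$ annihilate the affine part, and $\mc{A}_d(y)\succeq 0$, $\mc{G}(y)\succeq 0$ yield $g(\hat x)\geq \Lambda_y(g)=\xi^T\mc{G}(y)\xi \geq 0$. The technical kernel is identical in both proofs --- integrating the sos family $\|F_\xi(\hat x + t(x-\hat x))(x-\hat x)\|_2^2$ against the weight $(1-t)\,dt$ yields an sos --- and your Hankel-moment-matrix factorization is essentially a reproof of the cited Lemma~7 in \cite{HN1}, so that step is sound. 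What your route buys: no separation argument and no KKT machinery, and in particular your closing observation is right --- the hypothesis $G(\tilde x)\succ 0$ is never used, so your argument establishes the slightly stronger statement with the Slater assumption dropped (the paper needs it only to guarantee existence of the multiplier $\Lambda$). What the paper's route buys: the separation-plus-multiplier template carries over verbatim to Theorems~\ref{thm:defHen} and \ref{thm:RatRep}, where the domain constraints $g_k(x)\geq 0$ enter the certificates and a pure moment-matrix Jensen argument no longer suffices. One bookkeeping point you should make explicit: pairing $-h$ with $\mc{A}_d(y)$ requires squares of polynomials of degree at most $d$; this follows from the standard leading-form cancellation fact for sos polynomials of degree $\leq 2d$, or more directly from your own construction, since the coefficient of $t^k$ in $F_\xi(\hat x+t(x-\hat x))$ has degree at most $k\leq d-1$ in $x$, so each $c_{ik}$ has degree at most $d$.
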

\begin{proof}
We have already seen $S \subseteq L$, so it suffices to prove the reverse containment.
Suppose otherwise $L\ne S$, then there must exist a point $\hat x \in L/S$.
Since $S$ is closed and convex, by Hahn-Banach Theorem,
there exists a supporting hyperplane
$
\mc{H}=\{x\in\re^n:\, a^Tx \geq b \} \supseteq S
$
such that $a^T u = b$ for some $u\in S$ and $a^T \hat x < b$.
Consider the linear optimization problem
\be \label{linopt:G>=0}
\min_{x\in\re^n} \, a^Tx  \quad \, \mbox{subject to} \quad \, G(x) \succeq 0.
\ee
Clearly $u$ is a minimizer and $b$ is the optimal value.
The optimization problem \reff{linopt:G>=0} is convex.
The existence of $\tilde{x}$ with $G(\tilde{x})\succ 0$,
i.e., the Slater's condition holds, implies
there exists a matrix Lagrange multiplier $\Lambda \succeq 0$ such that
\[
\Lambda \bullet G(u) =0, \quad
a = \nabla_x (\Lambda \bullet G(x))\Big|_{x=u}.
\]
The value and gradient of $a^Tx - \Lambda \bullet G(x) - b$ vanish at $u$.
Then, by the Taylor expansion at $u$, we have
\begin{align*}
a^Tx - \Lambda \bullet G(x) - b  =
(x-u)^T \left(\int_0^1\int_0^t
- \nabla_{xx} (\Lambda \bullet G(u+s(x-u))\,ds\,dt  \right) (x-u).
\end{align*}
Since $\Lambda\succeq 0$, there exist vectors $\lmd^{(k)}$ such that
$ \Lambda = \sum_{k=1}^K \lmd^{(k)} (\lmd^{(k)})^T$. So, we have
\begin{align*}
 & \quad a^Tx - \Lambda\bullet G(x) - b  = \\
\sum_{k=1}^K (x-u)^T  & \left(\int_0^1\int_0^t -
\nabla_{xx} ( (\lmd^{(k)})^T G(u+s(x-u)) \lmd^{(k)} \,ds\,dt  \right) (x-u).
\end{align*}
Since $G(x)$ is matrix sos-concave, by Lemma~7 in \cite{HN1},
we know each summand in the above must be sos.
Thus $a^Tx - \Lambda \bullet G(x) - b$ must also be an sos polynomial of degree $2d$.
So, there exists a symmetric matrix $W\succeq 0$ such that the identity
\[
a^Tx - \Lambda \bullet G(x) - b =  [x]_d^T W [x]_d =
W \bullet \left( [x]_d[x]_d^T\right)
\]
holds. By definition of matrices $A_\af^{(d)}$ in  \reff{def:A_af}, we have
\[
a^Tx - \Lambda \bullet G(x) - b =
W \bullet \left(\sum_{\af\in\N_{\leq 2d}} x^\af A_\af^{(d)} \right).
\]
Since $\hat{x} \in L$, there exists $\hat{y}$ such that
$\hat{x}=(\hat{y}_{e_1},\ldots,\hat{y}_{e_n})$,
$\mc{G}(\hat{y})\succeq 0$ and $\mc{A}_d(\hat{y})\succeq 0$.
So, if we replace $ \hat{x}^\af$ by $\hat{y}_\af$ in the above identity,
then
\[
a^T\hat{x} - \Lambda \bullet \mc{G}(\hat{y}) - b =
W \bullet \left(\sum_{ |\af| \leq 2d } \hat{y}_\af A_\af^{(d)} \right)
= W \bullet \mc{A}_d(\hat{y}),
\]
or equivalently
\[
a^T\hat{x} - b =  \Lambda \bullet \mc{G}(\hat{y})  +  W \bullet \mc{A}_d(\hat{y}).
\]
Since $\Lambda,\mc{G}(\hat{y}),W,\mc{A}_d(\hat{y})\succeq 0$,
we must have $a^T\hat{x} - b \geq 0$,
which contradicts the previous assertion that $a^T\hat x - b <0$. So, $S=L$.
\end{proof}

\begin{exm} \label{emp:nonunf-matsos}
Consider the set $S=\left\{x\in \re^3: G(x)\succeq 0\right\}$ where
\[
G(x) =
\bbm
2- x_1^2-2x_3^2 & 1+x_1x_2 & x_1x_3  \\
1+x_1x_2 & 2-x_2^2-2x_1^2 & 1+x_2x_3 \\
x_1x_3 & 1+x_2x_3 & 2-x_3^2-2x_2^2
\ebm.
\]
\begin{figure}
\centering
\includegraphics[width=.66\textwidth]{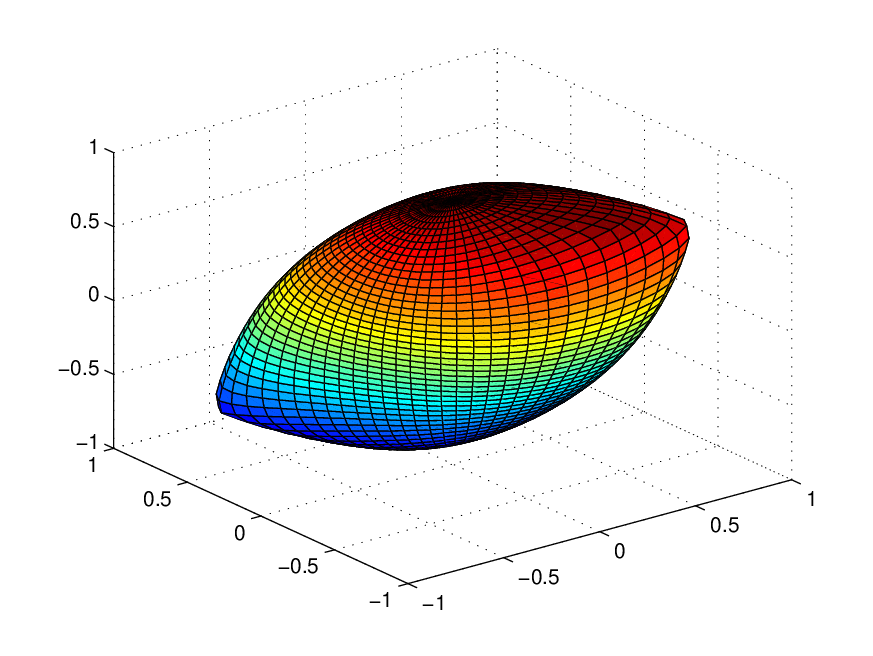}
\caption{ The drawn body is the convex set in Example~\ref{emp:nonunf-matsos}.}
\label{fig:nonunf-matsos:var3deg2}
\end{figure}
The Hessian $-\nabla_{xx}(\xi^TG(x)\xi)$ is positive semidefinite for all $\xi \in \re^3$.
This is because
\[
-\half \nabla_{xx}(\xi^TG(x)\xi) \, =  \,
\bbm
\xi_1^2+2\xi_2^2 & -\xi_1\xi_2 & -\xi_1\xi_3  \\
-\xi_1\xi_2 & \xi_2^2+2\xi_3^2 & -\xi_2\xi_3 \\
-\xi_1\xi_3 & -\xi_2\xi_3 & \xi_3^2+2\xi_1^2
\ebm \succeq 0 \quad \forall \, \xi\in \re^3,
\]
which is due to the fact that the bi-quadratic form
$z^T\big(-\half \nabla_{xx}(\xi^TG(x)\xi)\big)z$ in $(z,\xi)$
\[
z_1^2\xi_1^2+z_2^2\xi_2^2+z_3^2\xi_3^2+
2(z_1^2\xi_2^2+z_2^2\xi_3^2+z_3^2\xi_1^2)
-2(z_1z_2\xi_1\xi_2+z_1z_3\xi_1\xi_3+z_2z_3\xi_2\xi_3)
\]
is nonnegative everywhere, as shown by Choi \cite{Choi}.
So, this $G(x)$ is matrix sos-concave, because for every fixed $\xi$
the Hessian $-\nabla_{xx}(\xi^TG(x)\xi)$ is a constant matrix
which is positive semidefinite and must be sos.
Thus, we know $S$ is convex and by Theorem~\ref{thm:mat-sos} a lifted LMI for it is
\be \label{lmi:emp2.3}
\left\{
\baray{l}
x \in \re^3: \, \exists \, y_\af \, (\af\in\N_{\leq 2}) \quad
\mbox{ such that }  \\
\bbm
2- y_{200}-2y_{002} \!\!\!\! &  1+y_{110}  & y_{101}  \\
1+y_{110} & \!\!\!\!\!\!\!\!\! 2- y_{020} -2y_{200} \!\!\!\!\!\!\!\!\! &  1+y_{011}  \\
y_{101} & 1+y_{011} &  \!\!\! 2- y_{002} -2 y_{020}
\ebm \succeq 0,
\earay
\bbm
 1 & x_1 & x_2 & x_3 \\
x_1 & y_{200} &  y_{110} & y_{101}  \\
x_2 & y_{110} &  y_{020} &  y_{011}  \\
x_3 & y_{101} &  y_{011} & y_{002}
\ebm \succeq 0
\right\}.
\ee
A picture of the set $S$ is in Figure~\ref{fig:nonunf-matsos:var3deg2}.
It would be drawn by finding its boundary points
in various directions sampled on the unit sphere,
e.g., by making a fine enough grid.
\qed
\end{exm}

The matrix sos-concavity condition requires checking the Hessian
\[
- \nabla_{xx} (\xi^T G(x) \xi)
\]
is sos for every $\xi\in \re^m$.
This is almost impossible in applications.
However, a stronger condition called {\it uniformly matrix sos-concave} is
\[
-\nabla_{xx} (\xi^T G(x) \xi)  = F(\xi,x)^T F(\xi,x),
\]
where $F(\xi,x)$ is now a matrix polynomial in joint variables $(\xi,x)$.
It is easier to check. The uniform matrix sos-concavity can be
verified by solving a single SDP feasibility problem
(see Section~3 of \cite{HN1}).
Clearly, the following is a consequence of Theorem \ref{thm:mat-sos}.

\begin{cor} \label{cor:unf_matsos}
Suppose $G(\tilde{x})\succ 0$ for some $\tilde{x}$ .
If $G(x)$ is uniformly matrix sos-concave, then $S=L$.
\end{cor}

It should be pointed out that when $G(x)$ is matrix sos-concave,
it is not necessarily that $G(x)$ is uniformly matrix sos-concave.
For a counterexample, consider the $G(x)$ of Example~\ref{emp:nonunf-matsos}.
For any fixed $\xi\in \re^m$, the Hessian $-\nabla_{xx}(\xi^TG(x)\xi)$ there is independent of $x$
(since $G(x)$ is quadratic),
and it is sos if and only if it is positive semidefinite.
But if we think of $\xi$ as an indeterminant vector,
then $-\nabla_{xx}(\xi^TG(x)\xi)$ is not sos in $\xi$, as shown by Choi~\cite{Choi}.
Now let us see an example of uniformly matrix sos-concave $G(x)$.

%

\begin{exm}  \label{emp:unf-matsos}
Consider the set $S=\left\{x\in \re^2: G(x)\succeq 0\right\}$ where
\[
G(x) =
\bbm 2-2x_1^4-4x_1^2x_2^2-2x_2^4  & 3-x_1^3x_2-x_1x_2^3  \\
3-x_1^3x_2-x_1x_2^3  & 5-x_1^4-4x_1^2x_2^2-x_2^4  \ebm.
\]
\begin{figure}
\centering
\includegraphics[width=.66\textwidth]{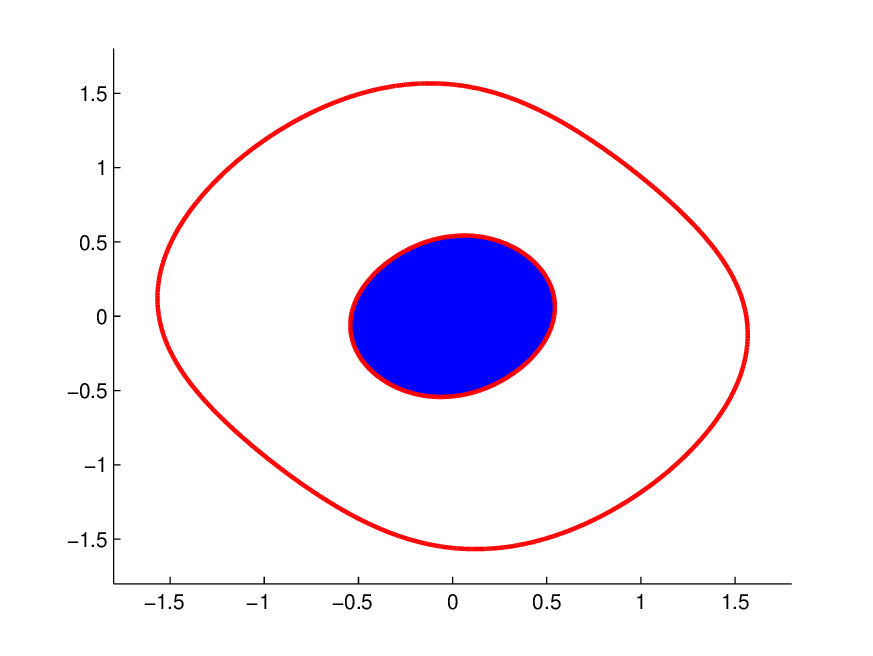}
\caption{ The shaded area is the convex set in Example~\ref{emp:unf-matsos},
and the curve is $\det G(x)=0$.}
\label{fig:unf-matsos:var2deg4}
\end{figure}
The above $G(x)$ is uniformly matrix sos-concave because
\begin{align*}
-\nabla_{xx} (\xi^TG(x)\xi) = H_1+H_2+H_3+H_4,
\end{align*}
\[
H_1 = 2 \bbm  2 \xi_1 x_1 +\xi_2 x_2 \\ 2 \xi_1 x_2 +\xi_2 x_1 \ebm
\bbm  2 \xi_1 x_1 +\xi_2 x_2 \\ 2 \xi_1 x_2 +\xi_2 x_1 \ebm^T, \quad
H_2 = 8(\xi_1^2+\xi_2^2) \bbm x_1^2  & x_1x_2 \\ x_1x_2 & x_2^2 \ebm,
\]
\[
H_3 = 2
\bbm \xi_1 x_1 & \xi_2 x_2 & \xi_2 x_1 \\
\xi_2 x_1 & \xi_1 x_2 & \xi_2 x_2 \ebm
\bbm \xi_1 x_1 & \xi_2 x_2 & \xi_2 x_1 \\
\xi_2 x_1 & \xi_1 x_2 & \xi_2 x_2 \ebm^T,
\]
\[
H_4 = 2 \left( \left((\xi^Tx)^2+\xi_2^2x_1^2\right)\bbm 1 & 0 \\ 0 & 1 \ebm
+ \xi_1^2 \bbm  2x_1^2+4x_2^2 & 0 \\ 0 & 3x_1^2+3x_2^2 \ebm \right).
\]
So, this set $S$ is convex, and by Corollary \ref{cor:unf_matsos} a lifted LMI for it is
\[
\left\{
\baray{l}
x \in \re^2: \,
\exists \, y_{ij} \, (0\leq i, j \leq 4) \, \mbox{ such that } \\ \qquad  \\
\bbm 2-2(y_{40}+2y_{22}+y_{04}) \!\!\!\!\!\!\! & 3-(y_{31}+y_{13}) \\
3-(y_{31}+y_{13}) & \!\!\!\!\!\!\!  5 - (y_{40}+3y_{22}+y_{04}) \ebm \succeq 0,
\earay
\bbm
1 & x_1 & x_2 & y_{20} & y_{11}  & y_{02} \\
x_1 & y_{20} & y_{11} & y_{30} & y_{21}  & y_{12} \\
x_2 & y_{11} & y_{02} & y_{21} & y_{12}  & y_{03} \\
y_{20} & y_{30} & y_{21} & y_{40} & y_{31}  & y_{22} \\
y_{11} & y_{21} & y_{12} & y_{31} & y_{22}  & y_{13} \\
y_{02} & y_{12} & y_{03} & y_{22} & y_{13}  & y_{04}
\ebm \succeq 0
\right\}.
\]
Let $F$ be the above LMI. The region of $x$ satisfying $F$
would be plotted by function
{\tt plot} provided in software {\tt YALMIP} \cite{yalmip},
which is drawn in the shaded area of
Figure~\ref{fig:unf-matsos:var2deg4}.
Clearly, the boundary of $S$ lies on the curve $\det G(x) = 0$,
which is also drawn in Figure~\ref{fig:unf-matsos:var2deg4}.
It has two connected components.
The inner one surrounds $S$ and is its boundary $\pt S$.
The outer one does not touch $S$, because $G(x)\succeq 0$ fails there.
So, Figure~\ref{fig:unf-matsos:var2deg4} confirms $F$ in the above represents $S$.
In this example, the determinant $\det G(x)$ is
neither concave nor convex.
\qed
\end{exm}

In the following, we list some classes of $G(x)$
that is (uniformly) matrix sos-concave.

\bnum

\item Suppose $G(x)$ is of the form
\[
G(x) \, = \, A_0(x)+f_1(x)A_1+\cdots+f_k(x)A_k
\]
where $A_0(x)$ is linear in $x$, every $f_i(x)$ is an sos-concave scalar polynomial,
and $A_1,\ldots,A_k \succeq 0$. Then $G(x)$ is uniformly matrix sos-concave because
\[
-\nabla_{xx} \, (\xi^TG(x)\xi) \, = \,
(-\nabla_{xx} f_1(x) ) (\xi^TA_1\xi) + \cdots + (-\nabla_{xx} f_k(x) ) (\xi^TA_k\xi)
\]
is sos in $(x,\xi)$. Such an example is
\[
\bbm 1 & 0 \\ 0 & 1 \ebm - x_1^4 \bbm 1 & 1 \\ 1 & 1 \ebm
- x_2^4 \bbm 1 & -1 \\ -1 & 1 \ebm.
\]
The determinant of the above is not concave in $\re^2$
(also not sos-concave).

\item Suppose $G(x)$ is of the form
\[
G(x) \, = \, F(x)+\diag(f_1(x),\ldots, f_m(x))
\]
where $F(x)$ is linear in $x$ and each $f_i(x)$ is a scalar polynomial.
For every $\xi$, we have
\[
\nabla_{xx} \, (\xi^TG(x)\xi) \, = \, \sum_{i=1}^m \, \xi_i^2  \nabla_{xx} f_i(x) .
\]
Clearly, $G(x)$ is matrix concave if and only if
every $f_i(x)$ is concave,
and $G(x)$ is matrix sos-concave if and only if
every $f_i(x)$ is sos-concave,
which is also equivalent to that
$G(x)$ is uniformly matrix sos-concave. Such an example is
\[
\bbm x_1 & x_2 \\ x_2 & x_1 \ebm - \bbm x_1^4 & 0 \\ 0 & x_2^4 \ebm.
\]
Its determinant is not concave in $\re^2$ (also not sos-concave).

\item  Suppose $G(x)$ is of the form
\[
G(x) \, = \, A(x) - Q(x)
\]
where $A(x)$ is linear in $x$ and $Q(x)$ is quadratic
and positive semidefinite everywhere. This $G(x)$ must be matrix sos-concave.
For every $\xi$, we have
\[
-\nabla_{xx}~\xi^TG(x)\xi \, = \, \nabla_{xx}~\xi^TQ(x)\xi.
\]
Since $Q(x)\succeq 0$ for all $x$, the quadratic polynomial $\xi^TQ(x)\xi$
is nonnegative everywhere, and there exists a symmetric matrix $W=W(\xi)\succeq 0$ such that
\[
\xi^TQ(x)\xi = x^T W x, \qquad \nabla_{xx}~\xi^TQ(x)\xi = 2W.
\]
Thus, the $G(x)$ is matrix sos-concave,
and $L$ in \reff{def:L-sos} is a lifted LMI for the set defined by
\[
A(x) - Q(x)\succeq 0.
\]
This generalizes the following result:
if $q(x)$ is a nonnegative quadratic scalar polynomial, then
for any linear $a(x)$ the set defined by
\[
a(x) - q(x) \geq 0
\]
is convex and SDP representable.
Such a $G(x)$ is given in Example~\ref{emp:nonunf-matsos}.

\item  Suppose $n=2$ and $G(x)$ is of the form
\[
G(x) \, = Q_1(x) - Q_2(x)-Q_{2d}(x)
\]
where $Q_1(x)$ is linear in $x$, $Q_2(x)$ is quadratic in $x$,
and $Q_{2d}(x)$ is homogeneous of degree $2d$.
Then, for any given $\xi\in\re^m$ the Hessian
\begin{align*}
-\nabla_{xx} \, (\xi^TG(x)\xi) \, =  \, \nabla_{xx} \, (\xi^TQ_2(x)\xi)
+ \nabla_{xx} \, (\xi^TQ_{2d}(x)\xi) \succeq 0 \quad \forall x\in\re^n
\end{align*}
if and only if both $\nabla_{xx} \, (\xi^TQ_2(x)\xi)$ and
$\nabla_{xx} \, (\xi^TQ_{2d}(x)\xi)$ are positive semidefinite for every $x$.
The Hessian $\nabla_{xx}(\xi^TQ_2(x)\xi)$ is independent of $x$.
Note that every bivariate homogeneous positive semidefinite matrix polynomial is sos
(see \cite[Theorem~7.1]{CLR}).
In this case, $G(x)$ is matrix concave if and only if it is matrix sos-concave.

\item Suppose $n=1$, then $G(x)$ is matrix concave if and only if
\begin{align*}
P(x):=\Big(- G_{ij}^{\prm\prm}(x)  \Big)_{1\leq i,j\leq m} \succeq 0 \quad \forall \, x\in \re,
\end{align*}
which is equivalent to that $P(x)$ is sos.
This is because every univariate
positive semidefinite matrix polynomial is sos \cite[Theorem~7.1]{CLR}.
In this case, the matrix concavity coincides with uniform matrix sos-concavity,
and $G(x)\succeq 0$ defines an interval like $[a,b]$.
Typically, the end points $a,b$ are algebraic (but not rational) functions of the coefficients of $G$.
However, the parameters of $L$ are rational in the coefficients of $G$.
This is interesting when a rational SDP representation is preferable.

\enum

\section{Strict matrix concavity} \label{sec:strct-cocav}
\setcounter{equation}{0}

This section assumes $S=\{x\in\mc{D}: G(x) \succeq 0\}$ and
\[
\mc{D}=\{x\in\re^n: g_1(x) \geq 0, \ldots, g_r(x) \geq 0  \}
\]
is a domain defined by polynomials $g_1, \ldots, g_r$.
When $\mc{D}$ is compact convex and
$G(x)$ is strictly matrix concave on $\mc{D}$, we will show that
$S$ is semidefinite representable, and a lifted LMI for it is
explicitly constructible.

Like in the previous section, a natural SDP relaxation of $S$ is constructible
by using moments. Let $g_0\equiv 1$ and
\[
d = \max\left\{ \deg(G(x))/2, \lceil\deg(g_k)/2\rceil, \, k=0,1,\ldots,r \right\}.
\]
For every integer $N\geq d$ and $k=0,\ldots,r$,
define symmetric matrices $B_{k,\bt}^{(N)}$ such that
\be  \label{def:B_af}
g_k(x) [x]_{N-d_k}  [x]_{N-d_k}^T =
\sum_{\bt\in\N_{\leq 2N}} x^\bt B_{k,\bt}^{(N)}, \quad
d_k =  \lceil\deg(g_k)/2\rceil.
\ee
This determines $B_{k,\bt}^{(N)}$ uniquely.
Then, define the linear matrix pencils $B_k^{(N)}(y)$ as
\begin{align*}
B_k^{(N)}(y) & =  \sum_{\bt\in\N_{\leq 2N}} y_\bt B_{k,\bt}^{(N)}, \quad
k=0,1,\ldots,r.
\end{align*}
Clearly, $S$ can be equivalently described as
\[
S=\left\{(y_{e_1},\ldots,y_{e_n})
\left|\baray{l}
\exists x\in \re^n,  y_\af = x^\af \quad \forall \, \af \in \N_{\leq 2N}, \\
\mc{G}(y)  \succeq 0, \, B_k^{(N)}(y) \succeq 0, k=0,\ldots,r
\earay\right.
\right\}.
\]
If the conditions $y_\af = x^\af$ are removed in the above, then $S$ is contained in the set
\be \label{def:L_N}
L_N = \left\{ x \in \re^n
\left|\baray{l}
\exists  y \in \re^{\binom{n+2N}{n}},
y_0 = 1, x_1 = y_{e_1}, \ldots, x_n = y_{e_n}, \\
\mc{G}(y)  \succeq 0, \, B_k^{(N)}(y) \succeq 0, k=0,\ldots,r
\earay \right.
\right\}.
\ee
So, we have $S \subseteq L_N$ for every $N\geq d$.
It is clear that $L_{N+1} \subseteq L_N$, because $L_{N+1}$ is a restriction of $L_N$.
Thus, it holds the nesting containment relation:
\[
L_{d} \supseteq \cdots \supseteq L_{N} \supseteq L_{N+1} \supseteq
\cdots \supseteq S.
\]
Does there exist a finite $N$ such that $L_N=S$? What conditions on $S$
make it true? In the following, we look for sufficient conditions
guaranteeing $L_N=S$.

Semidefinite representation of $S$ is closely related to
linear functionals nonnegative on $S$.
For a given $ 0 \ne \ell \in \re^n$, consider the linear optimization problem
\be \label{eq:linopt}
\underset{x\in int(\mc{D})}{\min} \quad \ell^Tx \quad
\mbox{subject to} \quad G(x) \succeq 0.
\ee
When $S\subseteq int(\mc{D})$ is compact, \reff{eq:linopt}
always has a minimizer $u\in \pt S \cap int(\mc{D})$.
If further there exists $\tilde{x}\in \mc{D}$ such that $G(\tilde{x})\succ 0$
({\it Slater's condition} holds) and $G(x)$ is matrix concave in $\mc{D}$,
then there exists $0 \preceq \Lambda \in \re^{m\times m}$ such that
(see \cite[p. 306]{Shp})
\be \label{lag:Lambda}
\Lambda \bullet G(u) =0, \quad
\ell = \nabla_x (\Lambda \bullet G(x)) \big|_{x=u}.
\ee
Thus, by its Taylor expansion at $u$, we know $\ell^T(x-u) - \Lambda \bullet G(x)$ equals
\be \label{Tay:strct-cav}
\baray{c}  (x-u)^T \cdot
\underbrace{\left(\int_0^1\int_0^t -\nabla_{xx}
(\Lambda \bullet G(u+s(x-u)) \, )\,ds\,dt\right)}_{H(u,x)}
\cdot (x-u).
\earay
\ee
If the above matrix polynomial $H(u,x)$
has a weighted sos representation in terms of $G(x)$ and $g_i(x)$,
then we can also get a similar one for $\ell^T(x-u)$.
For this purpose, we need some assumptions on $\mc{D}$ and $G(x)$.



\begin{ass} \label{ass:actDefHen}
(i) $G(x)$ is matrix concave on $\mc{D}$, and $G(x)$ satisfies
\be \nn
- \nabla_{xx} ( \Lambda \bullet G(u))
\succ 0 \quad \forall\, u\in \pt S, \quad \forall \, 0\ne \Lambda \succeq 0.
\ee
(ii) The archimedean condition (AC) holds for $\mc{D}$, i.e.,
there exist $M>0$ and sos polynomials $s_0, s_1, \ldots, s_r$ satisfying
\[
M - \|x\|_2^2 = s_0 + g_1s_1 + \cdots + g_rs_r.
\]
\end{ass}

Note that the item (i) of Assumption~\ref{ass:actDefHen}
is equivalent to $- \nabla_{xx} ( \xi^T G(u)\xi ) \succ 0 $
for every $u \in \pt S$ and every $0\ne \xi \in \re^m$.
Clearly, AC in Assumption~\ref{ass:actDefHen} implies
the domain $\mc{D}$ is compact, since $M-\|x\|_2^2\geq 0$
for all $x\in \mc{D}$.

\begin{theorem}  \label{thm:defHen}
Suppose $S \subseteq int(\mc{D})$, $\mc{D}$ is convex, and
$G(\tilde{x})\succ 0$ for some $\tilde{x}\in \mc{D}$.
If Assumption~\ref{ass:actDefHen} holds, then $S=L_N$ for all $N$ big enough.
\end{theorem}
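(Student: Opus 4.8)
The plan is to establish the nontrivial inclusion $L_N \subseteq S$ for all sufficiently large $N$, since $S \subseteq L_N$ already holds for every $N \geq d$. As $S$ is compact and convex, it equals the intersection of its supporting halfspaces, so it suffices to show that for every $0 \neq \ell \in \re^n$ and every $\hat{x} \in L_N$ one has $\ell^T \hat{x} \geq b_\ell$, where $b_\ell = \min_{x \in S} \ell^T x$. Fixing $\ell$, let $u \in \pt S$ solve \reff{eq:linopt}, so $b_\ell = \ell^T u$. Because $S \subseteq int(\mc{D})$, Slater's condition holds, and $G$ is matrix concave, the multiplier relations \reff{lag:Lambda} furnish a nonzero $\Lambda \succeq 0$ with $\Lambda \bullet G(u) = 0$ and $\ell = \nabla_x(\Lambda \bullet G(x))\big|_{x=u}$, which I normalize to $\|\Lambda\|_F = 1$. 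The Taylor identity \reff{Tay:strct-cav} then gives
\[
\ell^T(x-u) = \Lambda \bullet G(x) + (x-u)^T H(u,x)(x-u),
\]
and the task is to certify the right-hand side in a form that survives the moment substitution defining \reff{def:L_N}.

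The conceptual core is to show that $H(u,x)$ is \emph{uniformly positive definite} on $\mc{D}$. For $x \in \mc{D}$ the segment $u + s(x-u)$, $s \in [0,1]$, stays in the convex set $\mc{D}$, so matrix concavity makes $M(s) := -\nabla_{xx}(\Lambda \bullet G(u+s(x-u))) \succeq 0$ for all $s$. At $s = 0$ the point $u$ lies on $\pt S$, so Assumption~\ref{ass:actDefHen} gives $M(0) = -\nabla_{xx}(\Lambda \bullet G(u)) \succ 0$; since the set of pairs $(u, \Lambda)$ with $u \in \pt S$, $\Lambda \succeq 0$, $\|\Lambda\|_F = 1$ is compact, there is a uniform $\eps_0 > 0$ with $M(0) \succeq \eps_0 I$, and by uniform continuity $M(s) \succeq \half \eps_0 I$ on some interval $[0,\delta]$ with $\delta$ independent of $(u,\Lambda,x)$. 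Integrating over the region $0 \le s \le t \le 1$,
\[
H(u,x) = \int_0^1\!\!\int_0^t M(s)\,ds\,dt \succeq \half \eps_0\, \delta(1-\delta)\, I \succ 0,
\]
uniformly over all admissible $(u,\Lambda)$ and all $x \in \mc{D}$. Thus $H(u,x)$ is strictly positive definite on the compact set $\mc{D}$, even though the scalar $(x-u)^T H(u,x)(x-u)$ vanishes at $x=u$ and hence cannot be treated by a scalar Positivstellensatz directly.

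With $H(u,x) \succ 0$ on $\mc{D}$ in hand, I would invoke a matrix-valued version of Putinar's Positivstellensatz (valid because $\mc{D}$ is compact, after appending a redundant ball constraint so that the quadratic module is Archimedean): there exist sum-of-squares matrix polynomials $S_0(x), \ldots, S_m(x)$ with $H(u,x) = S_0(x) + \sum_{k=1}^m g_k(x) S_k(x)$. Writing $S_k = T_k^T T_k$ and contracting with $x-u$ turns each $(x-u)^T S_k(x)(x-u) = \|T_k(x)(x-u)\|_2^2$ into a scalar sos polynomial $\sigma_k$, so that
\[
\ell^T(x-u) = \Lambda \bullet G(x) + \sigma_0(x) + \sum_{k=1}^m g_k(x)\,\sigma_k(x).
\]
Now for $\hat{x} \in L_N$ with witness $\hat{y}$, replacing every $\hat{x}^\af$ by $\hat{y}_\af$ sends $\Lambda \bullet G(x)$ to $\Lambda \bullet \mc{G}(\hat{y}) \geq 0$, and, using \reff{def:B_af} together with $g_0 = 1$, sends $\sigma_0 + \sum_k g_k \sigma_k$ to $\sum_{k=0}^m W_k \bullet B_k^{(N)}(\hat{y}) \geq 0$, where $W_k \succeq 0$ are the Gram matrices of the $\sigma_k$. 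Hence $\ell^T \hat{x} - b_\ell \geq 0$, provided $N$ is large enough that $\deg \sigma_0$ and each $\deg(g_k \sigma_k)$ are at most $2N$.

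The main obstacle is to make this last degree condition hold for a \emph{single} $N$ valid across all directions $\ell$. Since the representation above, and in particular its degree, depends on $(u,\Lambda)$ through $H(u,x)$, I would upgrade the pointwise Positivstellensatz to one with a uniform degree bound as $(u,\Lambda)$ ranges over its compact parameter set. This is exactly the role of the uniform lower bound $H(u,x) \succeq \half\eps_0\delta(1-\delta) I$: effective degree bounds for Putinar-type representations depend only on the positivity margin and on the sizes of the coefficients, and here both are uniform because the coefficients of $H(u,x)$ are polynomial in $(u,\Lambda)$ and hence bounded. (Alternatively, one covers the compact boundary $\pt S$ by finitely many neighborhoods, builds local representations of bounded degree, and patches them with a smooth partition of unity.) Fixing such an $N$ yields $\ell^T \hat{x} \geq b_\ell$ for all $\ell$ and all $\hat{x} \in L_N$, whence $L_N \subseteq S$ and therefore $S = L_N$. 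I expect securing the uniform degree bound to be the delicate point, with the strict definiteness of $H$ supplied by Assumption~\ref{ass:actDefHen} being the enabling structural input.
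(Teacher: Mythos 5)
Your proposal is correct and follows essentially the same route as the paper's proof: a supporting hyperplane with Lagrange multiplier $\Lambda$ normalized in Frobenius norm, the integral Taylor form $H(u,x)$, uniform positive definiteness of $H(u,x)$ on $\mc{D}$ from Assumption~\ref{ass:actDefHen} plus compactness, then a Putinar-type matrix sos representation with a uniform degree bound whose Gram matrices survive the moment substitution into $L_N$. The only difference is that where you sketch the uniform degree bound yourself (effective Putinar bounds, or patching, with an Archimedean fix via a redundant ball constraint), the paper simply cites Theorem~27 of \cite{HN1}, which supplies exactly this parametrized-family representation with a single $N^*$ depending only on the margin $\dt$ and the norm bound $\Dt$.
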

\begin{proof}
For a matrix polynomial $G(x)$ given in \reff{eq:Gdef},
we define its norm $\|G(x)\|$ as
\[
\|G(x)\| = \max_{\af\in\N_{\leq 2d}} \,
\frac{\af_1 ! \cdots \af_n !}{ |\af| !} \, \|G_\af\|_2.
\]
The AC of Assumption~\ref{ass:actDefHen} implies $\mc{D}$ is compact.
So, there exists $\Dt>0$ such that
\[
\|\Lambda\|_F^{-1} \cdot \| H(u,x) \|
\leq \Dt \quad
\forall \, 0 \ne \Lambda \in \re^{n\times n}, \quad  \forall \,
u\in \pt S, \quad \forall \,x\in \mc{D}.
\]
Here $H(u,x)$ is defined in \reff{Tay:strct-cav}.
Assumption~\ref{ass:actDefHen} implies $H(u,x)\succ 0$ for all $u\in \pt S$ and $x\in \mc{D}$.
This is because otherwise if $H(u,x)$ is not positive definite,
we can find $0\ne v \in \re^n$ such that $v^TH(u,x)v=0$, i.e.,
\[
\int_0^1\int_0^t v^T \Big( -\nabla_{xx}
\big(\Lambda \bullet G(u+s(x-u)\big) \, \Big)v\,ds\,dt =0.
\]
Since $G(x)$ is matrix concave on the convex domain $\mc{D}$, we must have
\[
v^T \Big( -\nabla_{xx} \big(\Lambda \bullet G(u+s(x-u)\big) \Big)v = 0 \quad
\forall \, 0\leq s \leq t\leq 1.
\]
In particular, we get $v^T \Big( -\nabla_{xx} \big(\Lambda \bullet G(u)\big) \Big)v = 0$,
which contradicts Assumption~\ref{ass:actDefHen}.
Therefore, by the compactness of $\pt S$ and $\mc{D}$, there exists $\dt>0$ satisfying
\[
\|\Lambda\|_F^{-1} \cdot H(u,x)
\succeq \dt I_n \quad \forall \, u\in \pt S,
\quad \forall \, x \in \mc{D}, \quad \forall \, 0\ne \Lambda \succeq 0.
\]
By Theorem~29 in \cite{HN1} and the AC for $\mc{D}$,
there exists an integer $N^*$ such that
for every $0\ne \Lambda \succeq 0$ and  $u\in \pt S$,
there exist sos matrices $F_0(x), F_1(x),\ldots, F_r(x)$ satisfying
\be  \label{eq:ballSOS}
\|\Lambda\|_F^{-1} \cdot H(u,x) = \sum_{k=0}^r g_k(x) F_k(x),
\ee
\[
\deg(F_k)+ 2 d_k \leq 2 (N^*-1), \, k=0,,\ldots, r.
\]

Now we claim $S=L_{N^*}$. Since $S\subseteq L_{N^*}$, we need to show
$L_{N^*}\subseteq S$. Suppose otherwise there exists $\hat{x} \in L_{N^*}/S$.
Since $\mc{D}$ is compact convex and $G(x)$ is matrix concave on $\mc{D}$,
$S$ is closed and convex. By Hahn-Banach Theorem,
there exist $0\ne \ell \in \re^n$ and $u\in \pt S$ satisfying
\[
\ell^T(x-u) \geq 0 \quad \forall \, x\in S, \quad \ell^T(\hat{x}-u) <0.
\]
Consider the linear optimization problem \reff{eq:linopt} with this $\ell$.
The point $u \in \pt S$ is a minimizer of \reff{eq:linopt},
and it is also a local minimizer of
\be \label{linopt:uncon}
\underset{x\in \re^n}{\min} \quad \ell^Tx \quad
\mbox{subject to} \quad G(x) \succeq 0.
\ee
Since $G(\tilde{x})\succ 0$ and $G(x)$ is matrix concave, it holds that
\[
G(u) + \sum_{i=1}^n (\tilde{x}_i-u_i) \frac{\pt G(x)}{\pt x_i}\big|_{x=u}
\succeq G(\tilde{x}) \succ 0.
\]
This means the Mangasarian-Fromovitz (MF) condition holds
at $u$ for optimization problem \reff{linopt:uncon},
and thus the first order necessary condition holds at $u$
(see \cite[p.~306]{Shp}).
So, there exists $\Lambda \succeq 0$ satisfying \reff{lag:Lambda}.
From \reff{Tay:strct-cav} and \reff{eq:ballSOS}, we know there exist sos polynomials
$p_0$, $p_1$, \ldots, $p_r$ satisfying
\begin{align*}
\ell^T(x -u) = \Lambda \bullet G(x) + \sum_{k=0}^r p_k(x) g_k(x), \\
\deg(p_k)+2d_k \leq 2N^*, \quad k=0,1,\ldots, r.
\end{align*}
So, there are symmetric matrices $W_0, W_1, \ldots, W_r\succeq 0$ such that
\[
\ell^T(x - u) =  \Lambda \bullet G(x) +
\sum_{k=0}^r  g_k(x)  [x]_{N^*-d_k}^T W_k [x]_{N^*-d_k}.
\]
By definition of matrices $B_{k,\bt}^{(N^*)}$ in \reff{def:B_af}, it holds the identity
\[
\ell^T (x-u) =  \Lambda \bullet G(x) +
\sum_{k=0}^r W_k \bullet \left( \sum_{\bt\in\N_{\leq 2d}} x^\bt B_{k,\bt}^{(N^*)} \right).
\]
By the choice of $\hat{x}$, there exists $\hat{y}$ such that
$\hat{x} = (\hat{y}_{e_1},\ldots, \hat{y}_{e_n})$,
$\mc{G}(\hat{y})\succeq 0$, and every $B_k^{N^*}(\hat{y})\succeq 0$.
So, if each $ \hat{x}^\af$ is replaced by $\hat{y}_\af$ in the above, then
\[
\ell^T ( \hat{x} - u) =  \Lambda \bullet \mc{G}(\hat{y}) +
\sum_{k=0}^r W_k \bullet B_k^{N^*}(\hat{y}) \geq 0,
\]
which contradicts $\ell^T(\hat x - u) <0$. Hence, we must have $S=L_{N^*}$.

For every $N\geq N^*$, the relation $S\subseteq L_{N} \subseteq L_{N^*}$ implies
$S=L_{N}$.
\end{proof}

Assumption~\ref{ass:actDefHen} requires to check
$-\nabla_{xx}(\Lambda \bullet G(u))\succ 0$
for every nonzero $\Lambda \succeq 0$ and $u\in \pt S$,
which is sometimes very inconvenient.
However, Assumption~\ref{ass:actDefHen} is true
if $G(x)$ is {\it strictly matrix concave} on $\mc{D}$, that is,
for every $0\ne \xi \in \re^m$ the Hessian $-\nabla_{xx}(\xi^TG(x)\xi) \succ 0 $
for all $x\in \mc{D}$.
So, the following is a consequence of Theorem~\ref{thm:defHen}.

\begin{cor}  \label{cor:str-cav}
Suppose $S \subseteq int(\mc{D})$, $\mc{D}$ is convex, and
$G(\tilde{x})\succ 0$ for some $\tilde{x}\in \mc{D}$.
If $G(x)$ is strictly matrix concave on $\mc{D}$ and the archimedean condition holds,
then $S=L_N$ for all $N$ big enough.
\end{cor}

We now give an example of how to apply Theorem~\ref{thm:defHen} and Corollary~\ref{cor:str-cav}.

\begin{exm} \label{emp:square}
Consider $\mc{D}=[-1,1]^2$ is the square, $g_1(x)=1-x_1^2$, $g_2(x)=1-x_2^2$ and
\[
G(x) = \bbm 1-x_1^2-\half x_2^2  & \frac{1}{6}(x_1^3+x_2^3) \\
 \frac{1}{6}(x_1^3+x_2^3) & 1-\half x_1^2-x_2^2 \ebm.
\]
The matrix $G(x)$ is strictly concave over $\mc{D}$,
because for every $0\ne \xi \in\re^2$ the Hessian
\[
-\nabla_{xx}(\xi^TG(x)\xi) =
\bbm  2\xi_1^2+\xi_2^2-2\xi_1\xi_2 x_1 & 0 \\
0 & \xi_1^2+2\xi_2^2-2\xi_1\xi_2 x_2 \ebm
\]
is positive definite for all $x\in [-1,1]^2$.
So, the set $S=\{x\in [-1,1]^2: G(x)\succeq 0\}$ is convex.
Its boundary lies on the curve $\det G(x)=0$,
which is drawn of Figure~\ref{fig:square-strcav}.
The convex region surrounded by $\det G(x)=0$ is the set $S$,
which is drawn in the shaded area in Figure~\ref{fig:square-strcav}.
Some part of the curve $\det G(x)=0$ does not lie on the boundary $\pt S$,
because $G(x)$ is not positive semidefinite there.
\begin{figure}
\centering
\includegraphics[width=.66\textwidth]{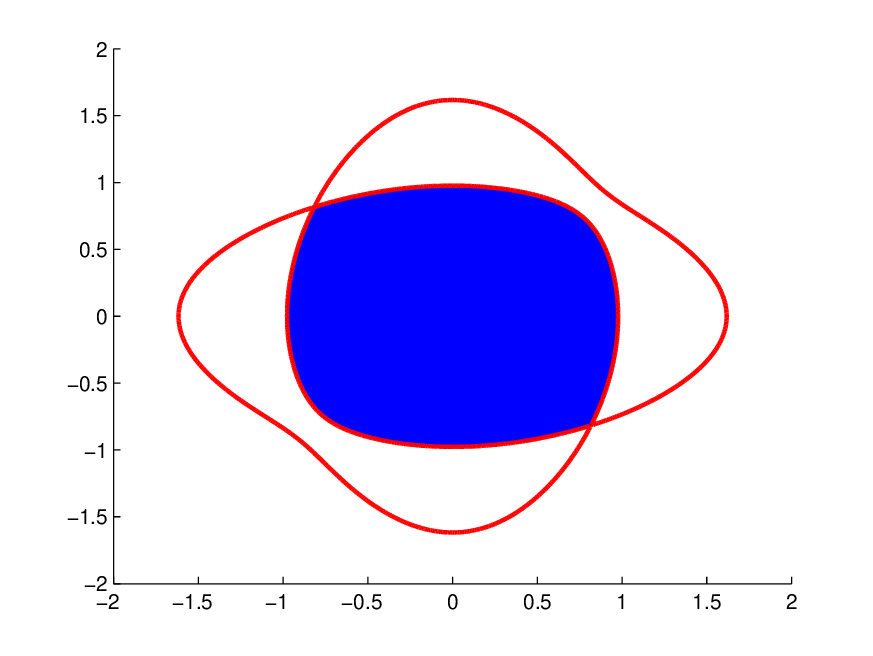}
\caption{ The shaded area is the convex set in Example~\ref{emp:square},
and the curve is $\det G(x)=0$.}
\label{fig:square-strcav}
\end{figure}
The determinant $\det G(x)$ is not concave
(also not convex) over $[-1,1]^2$,
e.g., its Hessian at $(0,3/4)$ is indefinite.

By Corollary~\ref{cor:str-cav}, the $L_N$ in \reff{def:L_N}
represents $S$ for $N$ big enough.
Actually, we have $L_2 = S$ in this example.
This justification would be obtained by investigating
the degree bound $N^*$ in \reff{eq:ballSOS}.
When $\Lambda = \xi\xi^T$ has rank one,
the matrix $H(u,x)$ defined in \reff{Tay:strct-cav} is
\[
\bbm
\half(2\xi_1^2+\xi_2^2) -\frac{1}{3}\xi_1\xi_2(2u_1+x_1)  & 0 \\
0 & \half (\xi_1^2+2\xi_2^2)-\frac{1}{3}\xi_1\xi_2(2u_2+x_2)
\ebm.
\]
Note the identities
\begin{align*}
\half(2\xi_1^2+\xi_2^2)-\frac{1}{3}\xi_1\xi_2(2u_1+x_1) & =
\frac{1}{3}\left(\xi_1 u_1 - \xi_2\right)^2 +
\frac{1}{6} \left( \xi_1 x_1 - \xi_2\right)^2 \\
& \quad + \frac{\xi_1^2}{2} +  \frac{\xi_1^2}{3}(1-u_1^2) + \frac{\xi_1^2}{6}(1-x_1^2), \\
\half(\xi_1^2+2\xi_2^2)-\frac{1}{3}\xi_1\xi_2(2u_2+x_2) & =
\frac{1}{3}\left(\xi_2 u_2 - \xi_1\right)^2 +
\frac{1}{6} \left(\xi_2 x_2 - \xi_1\right)^2 \\
& \quad + \frac{\xi_2^2}{2} +  \frac{\xi_2^2}{3}(1-u_2^2) + \frac{\xi_2^2}{6}(1-x_2^2).
\end{align*}
So, the representation of $H(u,x)$ in \reff{eq:ballSOS}
is true for $N^*=2$ when $\Lambda = \xi\xi^T$ has rank one.
Every $\Lambda \succeq 0$ is a sum of rank one matrices like $\xi\xi^T$.
Thus, the representation of $H(u,x)$ in \reff{eq:ballSOS} is also true
for $N^*=2$ when $\Lambda \succeq 0$ is general.
From the proof of Theorem~\ref{thm:defHen},
we can conclude that $L_2$ is a correct SDP representation of $S$.
\qed
\end{exm}

The construction of $L_N$ in \reff{def:L_N} is simply based on $g_i(x)$ and $G(x)$.
Theorem~\ref{thm:defHen} and Corollary~\ref{cor:str-cav}
tell us that $L_N$ is a correct SDP representation for $S$ for $N$ big enough,
when $G(x)$ is strictly concave over $\mc{D}$.
On the other hand, the degree bound $N$ is not given explicitly there,
which is not favorable in applications.
But the situation is not that bad in many cases.
As we have seen in Example~\ref{emp:square},
the degree bound $N$ would possibly be obtained
by investigating the representation of $H(u,x)$ in \reff{eq:ballSOS}.
We can assume $\Lambda$ is a rank one matrix,
and then use the strict matrix concavity of $G(x)$
to determine the degree bound $N^*$ in \reff{eq:ballSOS}.
This approach would work when $S$ is special like in Example~\ref{emp:square}.
For general case, the degree bound $N^*$ in \reff{eq:ballSOS}
is usually very difficult to get, as one would imagine.
But we have the same difficulty even when $G(x)$ is scalar.
In \cite{HN1}, when $S$ is defined by strictly concave polynomials,
it is only shown that the Lasserre type constructions would give a correct lifted LMI
when $N$ is big enough, but no explicit degree bounds are given there.
An interesting future work is to estimate good degree bounds.

We list some classes of $G(x)$ such that
the $N^*$ in \reff{eq:ballSOS} is relatively easy to estimate.
\bnum

\item Suppose $G(x)$ is of the form
\[
G(x) = A_0(x) + f_1(x)A_1 + \cdots + f_k(x) A_k
\]
where $A_0(x)$ is linear, every $f_i(x)$ is scalar and $A_1,\ldots,A_k \succeq 0$. Let
\[
H_{f_i}(u,x) = \int_0^1\int_0^t -\nabla_{xx} f_i(u+s(x-u)) ds dt.
\]
Then, the $H(u,x)$ in \reff{Tay:strct-cav} has the expression
\[
H(u,x) \, =  \, \sum_{i=1}^k H_{f_i}(u,x)  \Lambda \bullet A_i.
\]
Note that $\Lambda \bullet A_i \geq 0$ whenever $\Lambda \succeq 0$.
Clearly, if every $f_i(x)$ is strictly concave over $\mc{D}$,
then $G(x)$ is also strictly matrix concave over $\mc{D}$.
Therefore, the $N^*$ in \reff{eq:ballSOS}
would be investigated through studying the degree bound of the representation
\[
H_{f_i}(u,x) \, =  \, \sig_0(x) + g_1(x)\sig_1(x) + \cdots + g_r(x) \sig_r(x)
\]
where each $\sig_i(x)$ is sos.
This is relatively easier to do, because
the above $H_{f_i}(u,x)$ is independent of $\xi$.

\item Suppose $G(x)$ is of the form
\[
G(x) = A(x) + \diag (f_1(x), \ldots, f_m(x))
\]
where $A(x)$ is linear in $x$.
When $\Lambda = \xi\xi^T$ has rank one, we have the expression
\[
H(u,x) \, =  \, \sum_{i=1}^m \xi_i^2 H_{f_i}(u,x).
\]
The $G(x)$ is strictly matrix concave over $\mc{D}$ if and only if every $f_i(x)$ is so.
Thus, the $N^*$ in \reff{eq:ballSOS} would possibly be obtained
from estimating the degree bound for the representation of $H_{f_i}(u,x)$ as in the above.

\enum

We would like to remark that the smallest $L_d$ in \reff{def:L_N} is a lifted LMI for $S$
if $G(x)$ satisfies the q-module matrix concavity given in the next section.
This is a consequence of Theorem~\ref{thm:RatRep} and Corollary~\ref{cor:ufm-qmod}
that consider the more general case of $G(x)$ being rational.
This leads to our next section.

\section{Rational matrix inequality} \label{sec:rat-MI}
\setcounter{equation}{0}

This section assumes $S=\{x\in \mc{D}: G(x) \succeq 0\}$
is defined by a matrix rational function $G(x)$, i.e.,
every entry of $G(x)$ is rational.
Suppose $G(x)$ is matrix-concave on $\mc{D}$.
As before, the domain $\mc{D}=\{x\in\re^n: g_1(x) \geq 0, \ldots, g_r(x) \geq 0 \}$
is still defined by polynomials.
The case of $G(x)$ being a scalar rational function is discussed in \cite{Nie08}.
This section discusses the more general case of $G(x)$ being a matrix.
We first construct an SDP relaxation for $S$, and then prove
it represents $S$ when $G(x)$ satisfies certain conditions.

Suppose the matrix rational function $G(x)$ is given in the form
\be
G(x) = \frac{1}{ den(G(x)) } \sum_{\af \in \N^n: \, |\af| \leq \deg(G)} x^\af F_\af,
\ee
where $F_\af \in \re^{m\times m}$ are symmetric matrices,
$den(G(x))$ is the denominator of $G(x)$,
and $\deg(G)$ is the degree of $G(x)$,
which equals the maximum of degrees of the denominator and numerator.
%
%
Assume $den(G(x))$ is nonnegative on $\mc{D}$.
We say $G(x)$ is {\it q-module matrix concave} over $\mc{D}$
if for every $\xi\in \re^m$ there exist sos polynomials $\sig_{i,j}(x,u)$ such that
\be \label{qmod:G}
\baray{c}
den(G(x)) den(G(u))^2 \cdot \Big( \xi^TG(u)\xi + ( \nabla_x  \xi^TG(x)\xi) ^T\Big|_{x=u}(x-u)
- \xi^TG(x)\xi  \Big) = \\
\overset{m}{\underset{i=0}{\sum}}  g_i(x) \left(
\overset{m}{\underset{j=0}{\sum}} g_j(u) \sig_{ij}(x,u)\right)
\earay
\ee
is an identity in $(x,u)$. The above $g_0\equiv 1$.
The condition \reff{qmod:G} is based on Putinar's Positivstellensatz \cite{Putinar}.
Clearly, if $G(x)$ is q-module matrix concave over $\mc{D}$,
then it must also be matrix concave over $\mc{D}$.
We would like to remark that the $S$ considered in Section~3
is a special case here (the denominator $den (G)$ is $1$).

Now we turn to the construction of a lifted LMI for $S$. Assume $G(x)$
is q-module matrix concave over $\mc{D}$ and \reff{qmod:G} holds. Let
\be \label{deg:qm}
\baray{c}
d  = \max \left\{
\underset{0\leq i,j \leq r}{\max} \quad
\lceil \half \deg_x(g_i \sig_{ij}) \rceil, \half \deg(G) \right\}.
\earay
\ee
For $i=0,\ldots, r$, define matrices $P_\bt^{(i)}, Q_\af^{(i)}$ such that
\be \label{def:PQ}
\baray{rcl}
\frac{g_i(x)}{den(G(x))} [x]_{d-d_i}[x]_{d-d_i}^T &= &
\underset{\af \in \N^n: |\af| + |LE(den(G))| \leq 2d }{\sum} Q_\af^{(i)} x^\af
+ \underset{\bt\in\N_{\leq 2d}: \bt < LE(den(G))}{\sum} P_\bt^{(i)} \frac{x^\bt}{den(G(x))}.
\earay
\ee
Here $LE(p)$ denotes the exponent of the leading monomial of $p(x)$
in the lexicographical ordering ($x_1 > x_2  > \cdots > x_n $),
that is, $x^\af > x^\bt$ if the first nonzero entry of $\af-\bt$ is bigger than $0$.
Let $y$ be a vector indexed by $\af \in \N^n$ with $|\af| + |LE(den(G))| \leq 2d$,
and $z$ be a vector indexed by $\bt\in\N_{\leq 2d}$ with $\bt < LE(den(G))$.
Define linear matrix pencils
\be  \label{lmi:Qyz}
\baray{l}
Q_i(y,z) =
\underset{\af \in \N^n: |\af| + |LE(den(G))| \leq 2d }{\sum} Q_\af^{(i)} y_\af
+ \underset{\bt\in\N_{\leq 2d}: \bt < LE(den(G))}{\sum} P_\bt^{(i)}  z_\bt,
\quad \, i= 0,1,\ldots,r.
\earay
\ee
Here $P_\af^{(i)}, Q_\af^{(i)}$ are defined in \reff{def:PQ}.
Suppose $G(x)$ can be written as
\[
G(x) =
\sum_{\af \in \N^n: |\af| + |LE(den(G))| \leq 2d } F_\af^{(1)} x^\af
+ \sum_{\bt\in\N_{\leq 2d}: \bt < LE(den(G))} F_\bt^{(2)} \frac{x^\bt}{den(G(x))},
\]
then define the linear matrix pencil
\[
F(y,z) =
\sum_{\af \in \N^n: |\af| + |LE(den(G))| \leq 2d } F_\af^{(1)} y_\af
+ \sum_{\bt\in\N_{\leq 2d}: \bt < LE(den(G))} F_\bt^{(2)} z_\bt.
\]
Clearly, $S$ can be equivalently described as
\[
S = \left\{(y_{e_1},\ldots,y_{e_n})
\left|\baray{l}
\exists\, x \in \re^n, \, y_\af = x^\af, z_\bt = \frac{x^\bt}{den(G(x))}
\quad \forall \, \af,\bt  \\
F(y, z) \succeq 0,\,
Q_i(y,z) \succeq 0, \, i=0,\ldots,r
\earay \right.
\right\}.
\]
If we remove $y_\af = x^\af, z_\bt = \frac{x^\bt}{den(G(x))}$ in the above,
then $S$ is a subset of
\be \label{Lqmod}
L_{qmod}  = \left\{x\in \re^n
\left|\baray{l}
\exists \,y,z, \, y_{0} = 1, x_1=y_{e_1}, \ldots, x_n = y_{e_n}, \\
F(y, z) \succeq 0,\, Q_i(y,z) \succeq 0, \, i=0,\ldots,r
\earay\right.
\right\}.
\ee
So, $S\subseteq L_{qmod}$.
We are interested in conditions guaranteeing $S = L_{qmod}$.

\begin{lem}  \label{lem:RATlinrep}
Assume $S\subset int(\mc{D})$ and $G(\tilde{x})\succ 0$
for some $\tilde{x}\in \mc{D}$.
Suppose $G(x)$ is q-module matrix concave over $\mc{D}$.
If $v\in \pt S$, $den(G(v))>0$,
and $a^T(x-v) \geq 0$ for all $x\in S$,  then
\[
den(G(x)) \cdot (a^T (x-v)  - \Lambda \bullet G(x)  )=  \sum_{i=0}^r  g_i(x) \sig_i(x)
\]
for some $\Lambda \succeq 0$ and sos polynomials $\sig_i$
with $\deg(g_i\sig_i) \leq 2d$.
\end{lem}
\begin{proof}
Consider the linear optimization problem
\[
\min_{x\in int(\mc{D})}  \quad a^Tx \quad
\mbox{ subject to } \quad  G(x)\succeq 0.
\]
The point $v \in \pt S$ is an optimizer.
Since $den(G(v))>0$, $G(x)$ is differentiable at $v$.
Since $S\subset int(\mc{D})$, $v$ is in the interior of $\mc{D}$.
Because $G(\tilde{x})\succ 0$
(the Slater's condition holds) and $G(x)$ is matrix concave on $\mc{D}$,
there exists $\Lambda \succeq 0$ such that
(see \cite[p.~306]{Shp})
\begin{align*}
a =  \nabla_x (\Lambda \bullet G(v)), \quad
\Lambda \bullet G(v) = 0.
\end{align*}
Hence, we get the identity
\begin{align*}
a^T(x-v) - \Lambda \bullet G(x)  =
\Lambda  \bullet  G(v) + \nabla_x (\Lambda \bullet G(v))^T(x-v)  - \Lambda \bullet G(x).
\end{align*}
Since $\Lambda \succeq 0$, we have a decomposition
$\Lambda = \sum_{i=1}^K \lmd^{(i)} (\lmd^{(i)})^T$. Then, it holds that
\begin{align*}
 & a^T(x-v) - \Lambda \bullet G(x) =  \\
\sum_{i=1}^K \Big\{
(\lmd^{(i)})^TG(v) \lmd^{(i)} + & \nabla_x ((\lmd^{(i)})^T G(v) \lmd^{(i)})^T(x-v)  -
(\lmd^{(i)})^TG(x) \lmd^{(i)} \Big\}.
\end{align*}
So, the lemma readily follows the q-module matrix concavity of $G(x)$.
\end{proof}

For a function $f(x)$, denote by $\mc{Z}(f)$ its real zero set, i.e.,
$\mc{Z}(f)=\{x\in\re^n: f(x) =0\}$.

\begin{theorem}  \label{thm:RatRep}
Assume $S$ is closed and convex, $S\subset int(\mc{D})$,
$G(\tilde{x})\succ 0$ for some $\tilde{x}\in \mc{D}$, and
\[
\dim\big( \, \mc{Z}(den(G)) \cap \pt S \, \big) < n-1.
\]
If $G(x)$ is q-module matrix concave over $\mc{D}$,
then $S = L_{qmod}$.
\end{theorem}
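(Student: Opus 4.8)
The plan is to prove the inclusion $L_{qmod}\subseteq S$, since $S\subseteq L_{qmod}$ is already established. The engine is Lemma~\ref{lem:RATlinrep}: at a supporting point $v\in\pt S$ with $den(G)(v)>0$ and $q(v)>0$ it produces $\Lambda\succeq0$ and sos polynomials $\sigma_i$ with $\deg(g_i\sigma_i)\le 2d$ such that
\[
p(x)\big(a^T(x-v)-\Lambda\bullet G(x)\big)=\sum_{i=0}^m g_i(x)\,\sigma_i(x).
\]
First I would turn this into a statement about $L_{qmod}$. Writing $\sigma_i(x)=[x]_{d-d_i}^T W_i[x]_{d-d_i}$ with $W_i\succeq0$, dividing by $p(x)$, and recognizing the resulting rational matrices $\tfrac{g_i(x)}{p(x)}[x]_{d-d_i}[x]_{d-d_i}^T$ and $G(x)$ through the expansions \reff{def:PQ} and the pencils \reff{lmi:Qyz}, the identity becomes, under the substitution $x^\af\mapsto y_\af$, $x^\bt/p(x)\mapsto z_\bt$,
\[
a^T(\hat x-v)=\Lambda\bullet F(y,z)+\sum_{i=0}^m W_i\bullet Q_i(y,z),
\]
valid for any $\hat x\in L_{qmod}$ with witnesses $(y,z)$ from \reff{Lqmod} (the affine part uses $y_0=1$ and $\hat x_j=y_{e_j}$). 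Since $\Lambda,W_i\succeq0$ and $F(y,z),Q_i(y,z)\succeq0$, the right side is nonnegative, so $a^T(\hat x-v)\ge0$: every nondegenerate supporting half-space of $S$ contains all of $L_{qmod}$.

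It then remains to show that the nondegenerate supporting half-spaces cut out $S$ exactly, i.e. $S=\bigcap\{x:a^T(x-v)\ge0\}$ over all such pairs $(a,v)$; combined with the previous paragraph this forces $L_{qmod}\subseteq S$. This is where the dimension hypotheses enter. Because $q>0$ on $int(\mc{D})\supseteq\pt S$, the condition $q(v)>0$ is automatic on $\pt S$, so the only possible degeneracy is $den(G)(v)=0$. On the complementary part of $\pt S$ the matrix $G$ is continuous and, being a boundary point of $S\subset int(\mc{D})$, singular, so $\det(G)=0$ there; passing to the closure, the degenerate boundary set is contained in $\mc{Z}(\det(G))\cap\mc{Z}(den(G))\cap\pt S$, which by hypothesis has dimension $<n-1$. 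Hence the nondegenerate supporting points are dense in the $(n-1)$-dimensional set $\pt S$. For $\hat x\notin S$ I would fix a supporting pair $(a,v)$ strictly separating $\hat x$, approximate $v$ by nondegenerate supporting points $v_k\to v$ with supporting directions $a_k\to a$, invoke the first part to get $a_k^T(\hat x-v_k)\ge0$, and let $k\to\infty$ to reach $a^T(\hat x-v)\ge0$, contradicting the strict separation.

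I expect this approximation to be the main obstacle, and the delicate case is a degenerate boundary point that is also a corner of $S$: there the normal cone is full-dimensional, so a given separating direction $a$ need not be a limit of supporting normals at nondegenerate points tending to $v$, and the naive limiting argument can stall. Resolving this requires more than the bare dimension count — one must exploit that the degenerate set has dimension at most $n-2$ in $\pt S$ together with the regularity forced by $G$ being a rational function differentiable wherever $den(G)\ne0$ and matrix concave, in order to rule out such obstructing corners; alternatively one replaces the single direction $a$ by a family of nondegenerate separating directions extracted from the open cone of functionals that strictly separate $\hat x$. The remaining ingredients are routine: the degree count guaranteeing $\deg(g_i\sigma_i)\le 2d$ so that the substitution lands inside the pencils $Q_i(y,z)$ and $F(y,z)$, the positivity $g_j(v)>0$ and $q(v)>0$ on $\pt S\subset int(\mc{D})$ that makes the weights in Lemma~\ref{lem:RATlinrep} honestly sos, and the use of the closedness and convexity of $S$ for the separation.
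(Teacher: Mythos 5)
Your proposal is correct and follows essentially the same route as the paper's proof: Hahn--Banach separation, use of the dimension hypotheses to pick a supporting hyperplane whose minimizer $v\in\pt S$ satisfies $den(G)(v)>0$ and $q(v)>0$, Lemma~\ref{lem:RATlinrep}, the Gram-matrix expansion $\sig_i(x)=[x]_{d-d_i}^T W_i [x]_{d-d_i}$, and substitution of the witnesses $(\hat y,\hat z)$ into the pencils $F(y,z)$ and $Q_i(y,z)$ to force $a^T\hat x - b \geq 0$, contradicting strict separation. The approximation difficulty you flag (a degenerate boundary point that is a corner with full-dimensional normal cone) is a genuine subtlety, but it is precisely the step the paper itself dispatches with the single sentence that ``by continuity, the supporting hyperplane can be chosen to satisfy $den(G)(v)>0$ and $q(v)>0$,'' so your treatment is, if anything, more scrupulous than the published argument.
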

\begin{proof}
Since $S\subseteq L_{qmod}$, it is sufficient for us to prove the reverse containment.
By a contradiction proof, suppose otherwise there exists $\hat{x} \in L_{qmod}/S$
and $(\hat{y},\hat{z})$ such that
\[
\hat{x}=(\hat{y}_{e_1},\ldots,\hat{y}_{e_n}), \quad
F(\hat{y},\hat{z}) \succeq 0,\quad
Q_i(\hat{y},\hat{z}) \succeq 0, \, i=0,\ldots,r.
\]
Since $S$ is convex and closed,
by Hahn-Banach Theorem, there exists a supporting hyperplane
$\{a^Tx = b\}$ of $S$ such that
$a^Tx \geq b$ for all $x\in S$ and $a^T \hat x < b$.
Let $v \in \pt S$ be a minimizer of $a^Tx$ on $S$.
Since $\dim\big( \mc{Z}(den(G)) \cap \pt S\big) < n-1$, by continuity,
the supporting hyperplane $\{a^Tx = b\}$
can be chosen to satisfy $den(G(v)) > 0$.
By Lemma~\ref{lem:RATlinrep}, we have
\be \label{eq:Rlrep2}
a^T(x-v) =\Lambda \bullet  G(x)+ \sum_{i=0}^r \frac{g_i(x)}{den(G(x))} \sig_i(x)
\ee
for some sos polynomials $\sig_i$ such that every $\deg(g_i \sig_i) \leq 2d$.
If we write $\sig_i$ as
\[
\sig_i(x) = [x]_{d-d_i}^T W_i [x]_{d-d_i}
\]
for symmetric $W_i \succeq 0 \, (i=0,1,\ldots,r)$, then the identity \reff{eq:Rlrep2} becomes
\begin{align*}
a^T(x-v)  =\Lambda \bullet G(x)+ \sum_{i=0}^r
\left(\frac{g_i(x)}{den(G(x))} [x]_{d-d_i} [x]_{d-d_i}^T\right) \bullet W_i =
\Lambda \bullet G(x) + \\
\sum_{i=0}^r  \left(
\underset{\af \in \N^n: |\af| + |LE(den(G))| \leq 2d }{\sum}  Q_\af^{(i)} x^\af
+ \underset{\bt \in \N^n: \bt < LE(den(G))}{\sum} P_\af^{(i)}  \frac{x^\bt}{den(G(x))} \right) \bullet W_i.
\end{align*}
In the above identity, if we replace every $x^\af$ by $\hat y_\af$
and $\frac{x^\bt}{den(G(x))}$ by $\hat z_\bt$, then
\[
a^T \hat x - b = \Lambda \bullet F(\hat{y}, \hat{z}) +
\sum_{i=0}^r Q_i(\hat y, \hat z) \bullet W_i \geq 0,
\]
because all $ \Lambda, F(\hat{y}, \hat{z}), Q_i(\hat y, \hat z), W_i \succeq 0$.
This contradicts $a^T\hat{x}<b$. So, $S=L_{qmod}$.
\end{proof}

The condition of q-module matrix concavity requires checking \reff{qmod:G}
for every $\xi\in\re^m$. In many situations this is almost impossible.
However, if we consider $\xi$ as an indeterminant, then a sufficient condition
guaranteeing \reff{qmod:G} is
\be \label{qmod:G-xi}
\baray{c}
den(G(x)) den(G(u))^2 \cdot \Big( \xi^TG(u)\xi + ( \nabla_x  \xi^TG(x)\xi)^T\Big|_{x=u}
(x-u) - \xi^TG(x)\xi  \Big) = \\
\overset{m}{\underset{i=0}{\sum}}  g_i(x) \left(
\overset{m}{\underset{j=0}{\sum}} g_j(u) \sig_{ij}(x,u,\xi)\right),
\earay
\ee
where every $\sig_{ij}(x,u,\xi)$ is now an sos polynomial in $(x,u,\xi)$.
If $G(x)$ satisfies \reff{qmod:G-xi},
we say $G(x)$ is {\it uniformly q-module matrix concave} over $\mc{D}$.
Clearly, the corollary below follows Theorem~\ref{thm:RatRep}.

\begin{cor}  \label{cor:ufm-qmod}
Assume $S$ is closed and convex, $S\subset int(\mc{D})$,
$G(\tilde{x})\succ 0$ for some $\tilde{x}\in \mc{D}$, and
\[
\dim\big( \, \mc{Z}(den(G)) \cap \pt S \, \big) < n-1 .
\]
If $G(x)$ is uniformly q-module matrix concave over $\mc{D}$,
then $S = L_{qmod}$.
\end{cor}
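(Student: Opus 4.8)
The plan is to deduce Corollary~\ref{cor:ufm-qmod} directly from Theorem~\ref{thm:RatRep}, by observing that uniform q-module matrix concavity is simply a stronger form of q-module matrix concavity. Once that implication is established, every remaining hypothesis of the corollary ($S$ closed and convex, $S\subset int(\mc{D})$, the Slater point $\tilde{x}$ with $G(\tilde{x})\succ 0$, and the two dimension conditions on $\mc{Z}(\det(G))\cap \mc{Z}(den(G))\cap \pt S$ and $\mc{Z}(q)\cap \pt S$) already matches Theorem~\ref{thm:RatRep} verbatim, so the conclusion $S=L_{qmod}$ follows at once.

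First I would fix an arbitrary $\xi\in\re^m$ and specialize the joint certificate \reff{qmod:G-xi} at this value. The key point is that substituting a constant vector for the auxiliary variable $\xi$ preserves the sum-of-squares property: if $\sig_{ij}(x,u,\xi)=\sum_k h_k(x,u,\xi)^2$ is SOS in the joint variables $(x,u,\xi)$, then freezing $\xi$ leaves $\sum_k h_k(x,u,\xi)^2$ a sum of squares of polynomials in $(x,u)$. Since \reff{qmod:G-xi} is a polynomial identity in $(x,u,\xi)$, specializing $\xi$ turns its left-hand side into the left-hand side of \reff{qmod:G} and its right-hand side into $\sum_{i} g_i(x)\big(\sum_j g_j(u)\sig_{ij}(x,u,\xi)\big)$ with SOS summands in $(x,u)$. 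As $\xi$ was arbitrary, this yields exactly the defining identity \reff{qmod:G}, so $G(x)$ is q-module matrix concave over $\mc{D}$ with respect to $(p,q)$.

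Next I would check that the specialization does not enlarge the degree parameter governing $L_{qmod}$. Freezing $\xi$ to a constant cannot raise the degree in $x$, so $\deg_x\big(g_i\,\sig_{ij}(x,u,\xi)\big)$ after substitution is bounded by $\deg_x\big(g_i\,\sig_{ij}(x,u,\xi)\big)$ before it; hence the integer $d$ produced from the joint certificate via \reff{deg:qm} is a single valid degree bound serving all $\xi$ simultaneously, and the lifted set in \reff{Lqmod} is the very $L_{qmod}$ to which Theorem~\ref{thm:RatRep} refers. There is no genuine obstacle in this argument—it is a routine specialization of an auxiliary indeterminate—and the only point deserving care is precisely this degree bookkeeping: a uniform certificate automatically supplies one common $d$, whereas the nonuniform notion a priori permits only a $\xi$-dependent bound. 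With q-module matrix concavity in hand and all other hypotheses carried over, Theorem~\ref{thm:RatRep} gives $S=L_{qmod}$.
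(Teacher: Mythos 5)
Your proof is correct and takes essentially the same route the paper intends: the corollary is stated there as an immediate consequence of Theorem~\ref{thm:RatRep}, the only content being that specializing the joint SOS certificate \reff{qmod:G-xi} at each fixed $\xi\in\re^m$ yields the pointwise certificate \reff{qmod:G}, which is exactly your argument. Your remark on degree bookkeeping---that the uniform certificate automatically supplies one common $d$ in \reff{deg:qm} valid for all $\xi$, whereas the nonuniform notion a priori gives only $\xi$-dependent bounds---is a sound clarification of a point the paper leaves implicit.
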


We would like to remark that the $L_{qmod}$ in \reff{Lqmod} is equivalent to
the $L_N$ in \reff{def:L_N} for $N=d$
when $G(x)$ is a matrix polynomial (its denominator is $1$).
Therefore, Theorem~\ref{thm:RatRep} and Corollary~\ref{cor:ufm-qmod}
imply that $L_d$ defined in \reff{def:L_N}
is also a correct SDP representation for $S$
under the (uniform) q-module matrix concavity.

Now we give some examples on how to apply
Theorem~\ref{thm:RatRep} and Corollary~\ref{cor:ufm-qmod}.

\begin{exm} \label{exp:qmod_orth}
Consider the set $S=\left\{x\in\re_+^2: G(x)\succeq 0\right\}$ where
\[
G(x) = \bbm 7-x_1+2x_2 & 5 \\ 5  & 11-x_2 \ebm
-\frac{1}{x_1x_2}
\bbm x_1 + x_2^3 & x_2^2 \\ x_2^2 & x_2 \ebm.
\]
Its domain $\mc{D}=\re_+^2$. The determinant of $G(x)$ is
\[
\frac{1}{x_1x_2}(x_1^2x_2^2 - 11x_1^2x_2 - 2x_1x_2^3 + 15x_1x_2^2 + 54x_1x_2
- 11x_1 + x_2^4 - 11x_2^3 + 8x_2^2 - 7x_2 + 1).
\]
Clearly, the boundary $\pt S$ lies on the curve $\det(G(x))=0$.
It is a planar curve of degree $4$,
and is drawn in Figure~\ref{fig:qmod_orthant}.
\begin{figure}
\centering
\includegraphics[width=.66\textwidth]{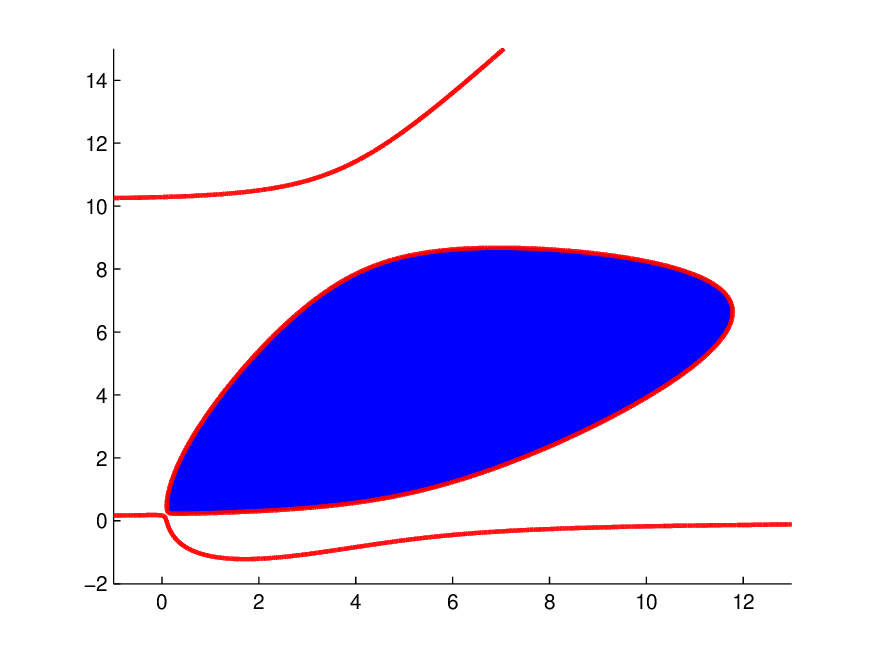}
\caption{ The shaded area is the convex set in Example~\ref{exp:qmod_orth},
and the curve is $\det G(x)=0$.}
\label{fig:qmod_orthant}
\end{figure}
The $G(x)$ here is uniformly q-module matrix concave over $\re_+^2$, because
\be \nn
\baray{c}
x_1x_2u_1^2u_2^2 \cdot \Big( \xi^TG(u)\xi +
( \nabla_x  \xi^TG(x)\xi) ^T\Big|_{x=u}(x-u) - \xi^TG(x)\xi  \Big) = \\
x_2u_2^2(u_1\xi_2 - x_1\xi_2 + u_1x_2\xi_1 - u_2x_1\xi_1)^2
+x_1u_1^2\xi_1^2(u_2 - x_2)^2.
\earay
\ee
Hence, the set $S$ here is convex, and by Corollary~\ref{cor:ufm-qmod}
it has the following lifted LMI
\[
\bbm 7-x_1+2x_2 & 5 \\ 5  & 11-x_2 \ebm -
\bbm z_{10} + z_{03} & z_{02} \\ z_{02} & z_{01} \ebm \succeq 0,
\]
\[
\bbm
0 & 0 & 0 & 0 & 1 & 0 \\
0 & 0 & 1 & 0 & x_1 & x_2 \\
0 & 1 & 0 & x_1 & x_2 & 0 \\
0 & 0 & x_1 & 0 & y_{20} & y_{11} \\
1 & x_1 & x_2 & y_{20} & y_{11} & y_{02} \\
0 & x_2 & 0 & y_{11} & y_{02} & 0
\ebm +
\bbm
z_{00} & z_{10} & z_{01} & z_{20} & 0 & z_{02} \\
z_{10} & z_{20} & 0 & z_{30} & 0 & 0 \\
z_{01} & 0 & z_{02} & 0 & 0 & z_{03} \\
z_{20} & z_{30} & 0 & z_{40} & 0 & 0 \\
0 & 0 & 0 & 0 & 0 & 0 \\
z_{02} & 0 & z_{03} & 0 & 0 & z_{04}
\ebm \succeq 0,
\]
\[
\bbm
0 & 0 & 1 \\
0 & 0 & x_1 \\
1 & x_1 & x_2
\ebm +
\bbm
z_{10} & z_{20} & 0 \\
z_{20} & z_{30} & 0 \\
0 & 0 & 0
\ebm \succeq 0,
\bbm
0 & 1 & 0 \\
1 & x_1 & x_2 \\
0 & x_2 & 0
\ebm +
\bbm
z_{01} & 0 & z_{02} \\
0 & 0 & 0 \\
z_{02} & 0 & z_{03}
\ebm \succeq 0.
\]
The set of $x$ satisfying the above LMIs
is drawn in the shaded area of Figure~\ref{fig:qmod_orthant}.
The convex region there surrounded by $\det G(x)=0$
is the set $S$, which is precisely the shaded area.
Some components of the curve $\det G(x)=0$ do not lie on the boundary $\pt S$,
because $G(x)\succeq 0$ fails there.
This confirms the above lifted LMI is correct.
\qed
\end{exm}

%
%

\begin{exm} \label{exmp:qmod_plane}
Consider the set $S=\left\{x\in\re^2: G(x) \succeq 0\right\}$ where
\[
G(x) = \bbm  1-2x_1^2-2x_1x_2-x_2^2 & x_1^2 \\ x_1^2  & 1-x_1^2 \ebm
+\frac{x_2^4}{x_1^2+x_2^2}
\bbm  -1 & 1 \\ 1 & -1 \ebm^T.
\]
Clearly, the boundary of $S$ lies on the curve $\det G(x)=0$,
which is drawn in Figure~\ref{fig:qmod_plane_deg4}. It has three connected components.
\begin{figure}
\centering
\includegraphics[width=.66\textwidth]{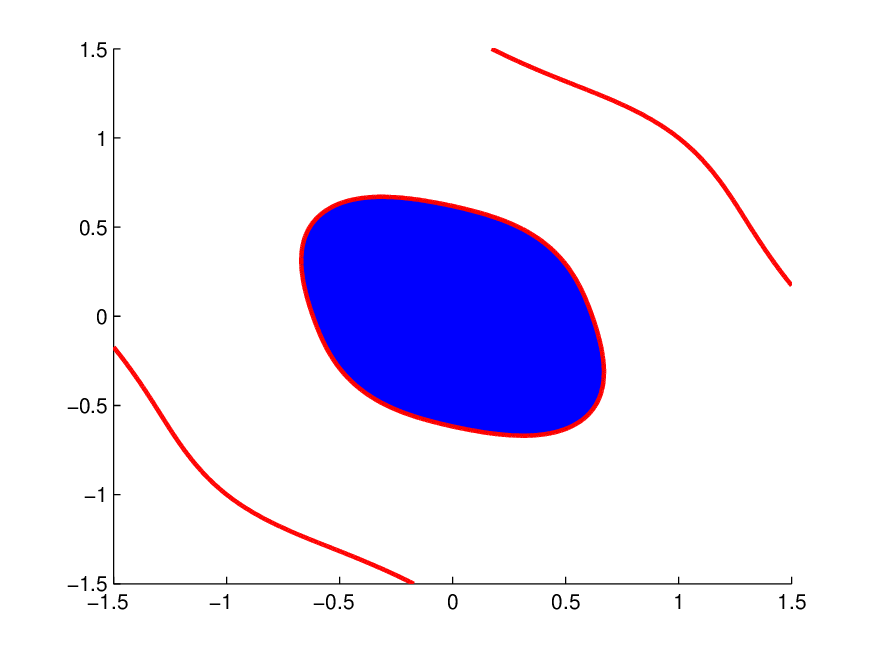}
\caption{ The shaded area is the convex set in Example~\ref{exmp:qmod_plane},
and the curve is $\det G(x) =0$.}
\label{fig:qmod_plane_deg4}
\end{figure}
The domain $\mc{D}=\re^2$, and the above
$G(x)$ is uniformly q-module matrix concave over $\re^2$, because
\be \nn
\baray{c}
\|x\|_2^2\cdot \|u\|_2^4 \cdot \Big( \xi^TG(u)\xi +
( \nabla_x  \xi^TG(x)\xi) ^T\Big|_{x=u}(x-u) - \xi^TG(x)\xi  \Big) = \\
\left(\overset{9}{\underset{i=1}{\sum}} f_i^2\right) \cdot (\xi_1-\xi_2)^2
+ \half \|x\|_2^2\cdot \|u\|_2^4 \cdot \|x-u\|_2^2\cdot \xi_1^2
\earay
\ee
where the polynomials $f_i$ are given as below
\begin{align*}
\baray{ll}
f_1 = -u_1 u_2 x_2^2-u_1 u_2 x_1^2+u_1 u_2^2 x_2+u_1^2 u_2 x_1, &
f_6 = \frac{1}{\sqrt{2}}(-u_2^2 x_2^2+u_2^3 x_2-u_1^2 x_1^2+u_1^3 x_1),\\
f_2 = -u_1 u_2 x_2^2+u_1 u_2 x_1^2+u_1 u_2^2 x_2-u_1^2 u_2 x_1, &
f_7 = -2 u_1 u_2 x_1 x_2 + u_1 u_2^2 x_1 +  u_1^2 u_2 x_2,\\
f_3 = \frac{1}{\sqrt{2}}(-u_2^2 x_1 x_2+u_2^3 x_1-u_1^2 x_1 x_2+u_1^3 x_2), &
f_8 = u_2^2 x_1^2 - u_1^2 x_2^2, \\
f_4 = \frac{1}{\sqrt{2}}( u_2^2 x_1 x_2-u_2^3 x_1-u_1^2 x_1 x_2+u_1^3 x_2), &
f_9 = -u_1 u_2^2 x_1+u_1^2 u_2 x_2. \\
f_5 =  \frac{1}{\sqrt{2}}(u_2^2 x_2^2-u_2^3 x_2-u_1^2 x_1^2+u_1^3 x_1),\\
\earay
\end{align*}
So, the set $S$ is convex, and by Corollary~\ref{cor:ufm-qmod}
a lifted LMI for it is
\[
\bbm  1-2y_{20}-2y_{11}-y_{02}-z_{04} &  y_{20}+z_{04} \\
y_{20}+z_{04} & 1-y_{20}-z_{04} \ebm \succeq 0,
\]
\[
\bbm
0 & 0   & 0   & 1      & 0      & 0 \\
0 & 1   & 0   & x_1    & x_2    & 0 \\
0 & 0   & 0   & x_2    & 0      & 0 \\
1 & x_1 & x_2 & y_{20}-y_{02} & y_{11} & y_{02} \\
0 & x_2 & 0   & y_{11} & y_{02}  & 0 \\
0 & 0   & 0   & y_{02} & 0      & 0
\ebm +
\bbm
z_{00}  &  z_{10} & z_{01}  & -z_{02}  & z_{11}  & z_{02} \\
z_{10}  & -z_{02} & z_{11}  & -z_{12}  & -z_{03} & z_{12} \\
z_{01}  & z_{11}  & z_{02}  & -z_{03}  & z_{12}  & z_{03} \\
-z_{02} & -z_{12} & -z_{03} & z_{04}  & -z_{13} & -z_{04} \\
z_{11}  & -z_{03} & z_{12}  & -z_{13}  & -z_{04} & z_{13} \\
z_{02}  & z_{12}  & z_{03}  & -z_{04}  & z_{13}  & z_{04}
\ebm \succeq 0.
\]
The feasible points $x$ satisfying the above
are drawn
in the shaded area of Figure~\ref{fig:qmod_plane_deg4}.
Only one component of the curve $\det G(x)=0$ lies on the boundary $\pt S$.
The other two do not touch $S$ since $G(x)\succeq 0$ fails there.
This confirms that the above LMI represents $S$.
\end{exm}

Now we make some remarks about the condition of (uniform) q-module matrix concavity.
When the denominator $den(G(x))$ is nonnegative on $\mc{D}$,
$G(x)\succeq 0$ is inequivalent to $den(G(x))\cdot G(x) \succeq 0$,
and the latter is a polynomial matrix inequality.
Thus, one would reduce a rational matrix inequality
to a polynomial matrix inequality and then apply the results
of Sections~\ref{sec:mat-sos} and \ref{sec:strct-cocav}.
However, we would like to point out that multiplying $den(G(x))$
usually destroys the matrix concavity of $G(x)$,
and the resulting $den(G(x))\cdot  G(x)$ typically is not
matrix concave or (uniform) matrix sos-concave.
In Examples~\ref{exp:qmod_orth} and \ref{exmp:qmod_plane},
it would be easily verified that the
$den(G(x))\cdot  G(x)$'s there are not matrix concave.

We now list some classes of rational $G(x)$
that are (uniformly) q-module matrix concave.

\bnum

\item Suppose $G(x)$ is of the form
\[
G(x) \, = \, A_0(x)+f_1(x)A_1+\cdots+f_k(x)A_k
\]
where $A_0(x)$ is linear,
$f_1(x),\ldots, f_k(x)$ are q-module concave rational functions over $\mc{D}$,
and $A_1,\ldots,A_k \succeq 0$.
Then $G(x)$ is clearly q-module concave over $\mc{D}$.

\item Suppose $G(x)$ is of the form
\[
G(x) \, = \, F(x)+\diag(f_1(x),\ldots, f_m(x))
\]
where $F(x)$ is linear and each $f_i(x)$ is a scalar rational function.
Clearly, $G(x)$ is matrix concave over $\mc{D}$ if and only if
every $g_i(x)$ is so,
and $G(x)$ is (uniformly) q-module matrix concave if and only if
every $g_i(x)$ is so.

\enum

\section{Conclusions}

This paper gives explicit constructions of SDP representations for
the set $S=\{x\in\mc{D}: G(x)\succeq 0\}$
when $G(x)$ is a matrix polynomial or rational function,
and proves sufficient conditions justifying them.
These conditions are based on the matrix concavity of $G(x)$.

We would like to remark that the SDP relaxations
\reff{def:L-sos} and \reff{def:L_N}
would be tightened if we replace $\mc{G}(y)\succeq 0$
by a bigger LMI. This follows an construction introduced
by Henrion and Lasserre \cite[II.D.]{HL06}.
Note that $G(x)\succeq0$ is equivalent to the PMI
(use $\otimes$ to denote Kronecker product of matrices)
\[
G(x) \otimes [x]_k[x]_k^T \succeq 0.
\]
The basic idea of their construction is that
replacing every monomial $x^\af$ in the expansion of $G(x) \otimes [x]_k[x]_k^T$
by a linear moment $y_\af$. Then, one would get a bigger LMI, say, $\mc{K}(y)\succeq 0$.
Since $G(x)$ is the first block of $G(x) \otimes [x]_k[x]_k^T$,
$\mc{G}(y)$ is a leading principle submatrix of $\mc{K}(y)$.
Thus, the LMI $\mc{K}(y)\succeq 0$ is tighter than $\mc{G}(y)\succeq 0$
in relaxing the set $S$. Therefore, if the constructions
\reff{def:L-sos} and \reff{def:L_N} use $\mc{K}(y)\succeq 0$
instead of $\mc{G}(y)\succeq 0$,
we can get similar semidefinite representability results
like Theorems~\ref{thm:mat-sos} and \ref{thm:defHen}.
On the other hand, the LMI $\mc{G}(y)\succeq 0$ is simpler than $\mc{K}(y)\succeq 0$,
and hence might be preferable in applications.
Under the archimedean condition, Henrion and Lasserre \cite{HL06} proved
the asymptotic convergence of the hierarchy of SDP relaxations
for minimizing a polynomial function subject to $G(x)\succeq 0$.

The matrix concavity is a strong condition for $S$ to be convex.
Generally it is very difficult to check.
A stronger but relatively easier checkable one is matrix sos-concavity.
This condition would also be difficult to check,
e.g., for the quadratic case it is already NP-hard.
A further stronger but much easier checkable condition is the uniform matrix sos-concavity,
which would be done by solving a single SDP.
When $G(x)$ is rational, similar conditions are (uniform) q-module matrix concavity.
Under these conditions, we justified some explicit SDP representations for $S$.
These conditions are certainly very strong.
However, to the author's best knowledge,
there are no more general conditions than them
for justifying the lifted LMIs constructed in this paper.
An interesting future work is to seek weaker conditions
justifying some efficiently constructible SDP representations.

\bigskip
\noindent
{\bf Acknowledgement} \,
The author would like to thank Bill Helton and the anonymous referees for fruitful suggestions
on improving this paper.

\end{document}